\newcommand{\bdry}[1]{\partial #1}
\newcommand{\A}{{\cal A}}
\newcommand{\F}{{\cal F}}
\newcommand{\calN}{{\cal N}}
\newcommand{\closure}[1]{\overline{#1}}
\newcommand{\dint}{\ds{\int}}
\newcommand{\dist}[2]{\text{dist}\, (#1,#2)}
\newcommand{\ds}[1]{\displaystyle #1}
\newcommand{\eps}{\varepsilon}
\newcommand{\id}[1][]{id_{\, #1}}
\newcommand{\incl}{\subset}
\newcommand{\loc}{\text{loc}}
\newcommand{\M}{{\cal M}}
\newcommand{\N}{\mathbb N}
\newcommand{\norm}[2][]{\left\|#2\right\|_{#1}}
\newcommand{\PS}[1]{$(\text{PS})_{#1}$}
\newcommand{\R}{\mathbb R}
\newcommand{\RP}{\R \text{P}}
\newcommand{\restr}[2]{\left.#1\right|_{#2}}
\newcommand{\seq}[1]{\left(#1\right)}
\newcommand{\set}[1]{\left\{#1\right\}}
\newcommand{\Z}{\mathbb Z}
\DeclareMathOperator{\supp}{supp}
\newenvironment{enumroman}{\begin{enumerate}

}{\end{enumerate}}
\newenvironment{properties}[1]{\begin{enumerate}

}{\end{enumerate}}
\newtheorem{corollary}{Corollary}[section]
\newtheorem{lemma}[corollary]{Lemma}
\newtheorem{proposition}[corollary]{Proposition}
\newtheorem{theorem}[corollary]{Theorem}
\theoremstyle{remark}
\newtheorem{example}[corollary]{Example}
\newtheorem{remark}[corollary]{Remark}
\numberwithin{equation}{section}
\title{\bf $p$-Laplacian problems involving critical Hardy-Sobolev exponents\thanks{{\em MSC2010:} Primary 35J92, 35B33, Secondary 35J20
\newline \indent\; {\em Key Words and Phrases:} $p$-Laplacian problems, critical Hardy-Sobolev exponents, existence, multiplicity, bifurcation, critical point theory, cohomological index, pseudo-index}}
\author{\bf Kanishka Perera\\
Department of Mathematical Sciences\\
Florida Institute of Technology\\
Melbourne, FL 32901, USA\\
\em kperera@fit.edu\\
[\bigskipamount]
\bf Wenming Zou\\
Department of Mathematical Sciences\\
Tsinghua University\\
Beijing 100084, China\\
\em wzou@math.tsinghua.edu.cn}
\date{}
\begin{document}

\maketitle

\begin{abstract}
We prove existence, multiplicity, and bifurcation results for $p$-Laplacian problems involving critical Hardy-Sobolev exponents. Our results are mainly for the case $\lambda \ge \lambda_1$ and extend results in the literature for $0 < \lambda < \lambda_1$. In the absence of a direct sum decomposition, we use critical point theorems based on a cohomological index and a related pseudo-index.
\end{abstract}

\section{Introduction}

Consider the critical $p$-Laplacian problem
\begin{equation} \label{1}
\left\{\begin{aligned}
- \Delta_p\, u & = \lambda\, |u|^{p-2}\, u + \frac{|u|^{p^\ast(s) - 2}}{|x|^s}\, u && \text{in } \Omega\\[10pt]
u & = 0 && \text{on } \bdry{\Omega},
\end{aligned}\right.
\end{equation}
where $\Omega$ is a bounded domain in $\R^N$ containing the origin, $1 < p < N$, $\lambda > 0$ is a parameter, $0 < s < p$, and $p^\ast(s) = (N - s)\, p/(N - p)$ is the critical Hardy-Sobolev exponent. In \cite{MR1695021}, Ghoussoub and Yuan showed, among other things, that this problem has a positive solution when $N \ge p^2$ and $0 < \lambda < \lambda_1$, where $\lambda_1 > 0$ is the first eigenvalue of the eigenvalue problem
\begin{equation} \label{2}
\left\{\begin{aligned}
- \Delta_p\, u & = \lambda\, |u|^{p-2}\, u && \text{in } \Omega\\[10pt]
u & = 0 && \text{on } \bdry{\Omega}.
\end{aligned}\right.
\end{equation}
In the present paper we mainly consider the case $\lambda \ge \lambda_1$. Our existence results are the following.

\begin{theorem} \label{Th4}
If $N \ge p^2$ and $0 < \lambda < \lambda_1$, then problem \eqref{1} has a positive ground state solution.
\end{theorem}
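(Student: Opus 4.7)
The plan is to obtain a positive ground state as a mountain pass critical point of the truncated energy functional
\[
I_\lambda(u) = \frac{1}{p}\int_\Omega |\nabla u|^p\,dx - \frac{\lambda}{p}\int_\Omega (u^+)^p\,dx - \frac{1}{p^\ast(s)}\int_\Omega \frac{(u^+)^{p^\ast(s)}}{|x|^s}\,dx
\]
on $W_0^{1,p}(\Omega)$; the truncation to $u^+$ ensures, via testing with $u^-$ and the strong maximum principle for the $p$-Laplacian, that any nontrivial critical point is strictly positive in $\Omega$.

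Mountain pass geometry follows from $\lambda < \lambda_1$: the variational characterization of $\lambda_1$ gives $\int_\Omega |\nabla u|^p - \lambda\int_\Omega (u^+)^p\,dx \ge (1-\lambda/\lambda_1)\int_\Omega|\nabla u|^p\,dx$, so $I_\lambda$ has a strict local minimum at $0$, while $I_\lambda(tu_0)\to-\infty$ as $t\to\infty$ for any fixed $u_0 > 0$. This produces a Palais-Smale sequence at the mountain pass level $c_\lambda > 0$. A concentration-compactness argument adapted to the Hardy-Sobolev setting then shows that $I_\lambda$ satisfies \PS{c} for every
\[
c < c^\ast := \frac{p-s}{p(N-s)}\,\mu_{s,p}^{(N-s)/(p-s)},
\]
where $\mu_{s,p}$ denotes the best constant in the Hardy-Sobolev inequality on $\R^N$.

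The crux is to show $c_\lambda < c^\ast$. Let $U$ be a positive, radially decreasing extremal for $\mu_{s,p}$, with the known decay $U(x)\sim |x|^{-(N-p)/(p-1)}$ at infinity, and set $U_\eps(x) = \eps^{-(N-p)/p}\, U(x/\eps)$; fix $\eta\in C_c^\infty(\Omega)$ with $\eta\equiv 1$ near the origin and $\supp\eta\subset\Omega$. Direct computation of the three integrals along $u_\eps := \eta U_\eps$ yields, as $\eps\to 0^+$,
\[
\int_\Omega |\nabla u_\eps|^p\,dx = \mu_{s,p}^{(N-s)/(p-s)} + O(\eps^{(N-p)/(p-1)}), \qquad \int_\Omega \frac{u_\eps^{p^\ast(s)}}{|x|^s}\,dx = \mu_{s,p}^{(N-s)/(p-s)} + O(\eps^{(N-s)/(p-1)}),
\]
together with the lower bound $\int_\Omega u_\eps^p\,dx \ge C\,\eps^p$ when $N > p^2$ and $\int_\Omega u_\eps^p\,dx \ge C\,\eps^p|\log\eps|$ when $N = p^2$. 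Since $N\ge p^2$ is equivalent to $(N-p)/(p-1)\ge p$, the $\lambda$-contribution strictly dominates the cutoff error, giving $\max_{t\ge 0} I_\lambda(tu_\eps) < c^\ast$ for $\eps$ small enough; hence $c_\lambda < c^\ast$, and the compactness step produces a positive critical point $u$ at level $c_\lambda$.

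Finally, a standard Nehari-manifold argument shows that $c_\lambda$ coincides with the ground state energy $\inf\{I_\lambda(v) : v\not\equiv 0,\ \langle I_\lambda'(v),v\rangle = 0\}$, because the fibering map $t\mapsto I_\lambda(tv)$ has a unique positive maximum for each $v$ with $v^+\not\equiv 0$. Therefore $u$ is a positive ground state solution. I expect the main obstacle to be the energy estimate of the third paragraph: the delicate balance between the cutoff error $O(\eps^{(N-p)/(p-1)})$ and the lower bound on $\int_\Omega u_\eps^p\,dx$ is exactly what forces the dimensional constraint $N\ge p^2$.
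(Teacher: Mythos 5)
Your proposal is correct and hinges on the same ingredients as the paper's proof: the Ghoussoub--Yuan compactness threshold $\frac{p-s}{(N-s)p}\mu_s^{(N-s)/(p-s)}$, the cutoff extremal functions for the Hardy--Sobolev inequality as test functions, and the energy estimate in which the balance between the cutoff error $O(\eps^{(N-p)/(p-1)})$ and the lower bound $\int u_\eps^p \gtrsim \eps^p$ (or $\eps^p|\log\eps|$ when $N=p^2$) forces $N\ge p^2$. Your exponents check out after accounting for the different normalization of the scaling parameter; in the paper's parametrization $\eps_{\mathrm{paper}} = \eps^{(p-s)/(p-1)}$, which turns your $\eps^{(N-p)/(p-1)}$ into the paper's $\eps_{\mathrm{paper}}^{(N-p)/(p-s)}$ and your $\eps^p$ into $\eps_{\mathrm{paper}}^{(p-1)p/(p-s)}$.

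The only (minor) differences are in packaging. The paper minimizes $I_\lambda$ directly over the Nehari manifold, invokes the (PS) condition to obtain a minimizer $u_0$, and then replaces $u_0$ by $|u_0|$, which is still on $\calN$ with the same energy, and applies the strong maximum principle. You instead truncate the nonlinearity to $u^+$, run the mountain pass theorem, and afterwards identify the mountain pass level with the Nehari infimum via the fibering map. Both routes are standard and equivalent here; the truncation approach shifts the positivity argument upstream (any nontrivial critical point of the truncated functional is nonnegative), while the paper's $|u_0|$ trick shifts it downstream, and the mountain pass step is an extra layer that the direct minimization avoids. Neither difference affects the substance, and both proofs require exactly the same compactness threshold and energy estimate.
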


\begin{theorem} \label{Theorem 1}
If $N \ge p^2$ and $\lambda > \lambda_1$ is not an eigenvalue of problem \eqref{2}, then problem \eqref{1} has a nontrivial solution.
\end{theorem}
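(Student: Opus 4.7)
The plan is to obtain the solution as a nontrivial critical point of the energy functional
\[
I_\lambda(u) = \frac{1}{p} \int_\Omega |\nabla u|^p - \frac{\lambda}{p} \int_\Omega |u|^p - \frac{1}{p^\ast(s)} \int_\Omega \frac{|u|^{p^\ast(s)}}{|x|^s}
\]
on $W_0^{1,p}(\Omega)$. Since $\lambda > \lambda_1$ and no linear eigenspace decomposition of $-\Delta_p$ is available, I would replace the usual linking with one built from the Fadell--Rabinowitz cohomological index together with a pseudo-index minimax theorem for symmetric functionals.

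First I would prove a local Palais--Smale condition: $I_\lambda$ satisfies \PS{c} for every
\[
c < c^\ast := \frac{p - s}{p(N - s)}\, \mu_{s,p}^{(N - s)/(p - s)},
\]
where $\mu_{s,p}$ denotes the best constant in the Hardy--Sobolev inequality on $\R^N$. This is a standard concentration-compactness argument adapted to the weight $|x|^{-s}$: the only obstruction to compactness of a bounded \PS{c} sequence is concentration at the origin, and each bubble costs exactly $c^\ast$ of energy.

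Next I would set up the linking. The cohomological-index eigenvalues $\lambda_k$, defined as minimax values of $\int_\Omega |\nabla u|^p$ over symmetric subsets of $\set{u : \int_\Omega |u|^p = 1}$ of index at least $k$, form an unbounded nondecreasing sequence of genuine eigenvalues of \eqref{2}; the hypothesis on $\lambda$ therefore yields some $k \ge 1$ with $\lambda_k < \lambda < \lambda_{k+1}$. Standard index calculus then furnishes a symmetric compact set $A_0$ with $i(A_0) = k$ on which the principal part of $I_\lambda$ is negative, together with a symmetric set $B_0$ on which $I_\lambda$ is bounded below by a positive constant on a small sphere. This is exactly the linking data needed for the pseudo-index scheme.

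The crucial step is to locate the pseudo-index minimax value strictly below $c^\ast$. For this I would enrich $A_0$ with a family of truncated Hardy--Sobolev extremals $u_\eps$ concentrating at the origin and estimate the supremum of $I_\lambda$ over the enriched set. The leading term reproduces $c^\ast$, the extremal on $\R^N$ contributes a negative correction of order $\eps^{(N-p)/(p-1)}$, and the hypothesis $N \ge p^2$ is precisely what forces the coupling and lower-order perturbative terms between the directions of $A_0$ and the bubble to be of strictly higher order, so that the final value sits in $(0, c^\ast)$.

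I expect the main obstacle to be exactly this last estimate. Unlike the case $p = 2$, the expansion of $\int |\nabla(v + t u_\eps)|^p$ produces genuinely nonlinear remainder terms whose size depends subtly on the pointwise behaviour of an $A_0$-direction $v$ near the origin and on the concentration scale, and the bookkeeping needed to trap the total contribution as strictly negative is the technical heart of the proof. Once the minimax value is pinned in $(0, c^\ast)$, the local \PS{c} property and the pseudo-index theorem together produce a critical point at that level, and hence a nontrivial solution of \eqref{1}.
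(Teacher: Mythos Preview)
Your outline has the right large-scale architecture (local \PS{c} below the threshold $c^\ast$, a cohomological-index linking between a $k$-dimensional negative set and the super-level set $\Psi_{\lambda_{k+1}}$, and an energy estimate using truncated Hardy--Sobolev extremals), but it misses the one idea that actually makes the proof go through and misidentifies the abstract tool.

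The decisive point is precisely the one you flag as the ``main obstacle'': the nonlinear cross terms in $\int_\Omega |\nabla(v + t u_\eps)|^p$ for $v$ in the negative set $A_0$. In the paper this obstacle is not estimated---it is \emph{eliminated}. One first invokes the Degiovanni--Lancelotti result that $\Psi^{\lambda_k}$ contains a compact symmetric set $E$ of index $k$ which is bounded in $C^1$ near the origin, then replaces each $v \in E$ by a cutoff $v_\delta$ that vanishes on $B_{3\delta/4}(0)$; the resulting set $E_\delta$ still has index $k$ and still lies below $\lambda_{k+1}$ (with error $O(\delta^{N-p})$). Meanwhile the truncated extremal $v_{\eps,\delta}$ is supported in $\overline{B_{\delta/2}(0)}$. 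Thus every $w \in E_\delta$ and the bubble $v_0 = \pi(v_{\eps,\delta})$ have \emph{disjoint supports}, so that pointwise
\[
|\nabla((1-t)w + t v_0)|^p = (1-t)^p\, |\nabla w|^p + t^p\, |\nabla v_0|^p,
\]
and similarly for the zero-order terms. The energy estimate on the linking set then reduces cleanly to $\psi_\lambda(v_0)$, which is handled by the standard bubble asymptotics; no interaction analysis is needed, and the hypothesis $N \ge p^2$ enters only through the competition between the $\eps^{(p-1)p/(p-s)}$ gain from $\int v_{\eps,\delta}^p$ and the $\eps^{(N-p)/(p-s)}$ loss from $\int |\nabla v_{\eps,\delta}|^p$ (your stated exponent $\eps^{(N-p)/(p-1)}$ is off). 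Without the disjoint-support trick, controlling the genuinely nonlinear remainder for arbitrary $p$ and arbitrary $v \in A_0$ is not merely technical bookkeeping---there is no clear reason it can be done at all.

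A secondary issue: the pseudo-index theorem is the wrong abstract result here. That theorem (Theorem~\ref{Theorem 6} in the paper) requires $i(A_0) \ge k+m$ against $i(S \setminus B_0) \le k$ and is used for the multiplicity Theorems~\ref{Theorem 4} and~\ref{Theorem 5}. For a single nontrivial solution one wants the cohomological \emph{linking} theorem (Theorem~\ref{Theorem 3}), where $i(A_0) = i(S \setminus B_0) = k$ and the extra direction $v_0 \notin A_0$ plays the role of the mountain-pass direction; your data $i(A_0) = k$ would give $m=0$ in the pseudo-index scheme and produce nothing.
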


\begin{theorem} \label{Theorem 2}
If
\begin{equation} \label{27}
(N - p^2)(N - s) > (p - s)\, p
\end{equation}
and $\lambda \ge \lambda_1$, then problem \eqref{1} has a nontrivial solution.
\end{theorem}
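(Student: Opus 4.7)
The plan is to apply an abstract minimax theorem based on the $\Z_2$-cohomological index $i$ and an associated pseudo-index, a framework suited to situations where $\lambda$ may coincide with an eigenvalue of \eqref{2} and no direct-sum decomposition is available. Let
\[
I_\lambda(u) = \frac{1}{p} \int_\Omega |\nabla u|^p\, dx - \frac{\lambda}{p} \int_\Omega |u|^p\, dx - \frac{1}{p^\ast(s)} \int_\Omega \frac{|u|^{p^\ast(s)}}{|x|^s}\, dx
\]
on $X := W^{1,p}_0(\Omega)$, so that nontrivial critical points are exactly the weak solutions of \eqref{1}. A standard concentration-compactness analysis shows that $I_\lambda$ satisfies \PS{c} for every
\[
c < c^\ast := \frac{p - s}{p\,(N - s)}\, S_s^{(N - s)/(p - s)},
\]
where $S_s$ is the best Hardy-Sobolev constant.

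Introduce the variational eigenvalues
\[
\lambda_k := \inf_{M \in \F_k}\ \sup_{u \in M} \int_\Omega |\nabla u|^p\, dx,
\qquad
\F_k := \set{M \subset \M : M \text{ symmetric and compact},\ i(M) \ge k},
\]
with $\M := \set{u \in X : \norm[p]{u} = 1}$; these form a nondecreasing sequence diverging to infinity, whose first term is $\lambda_1$. Given $\lambda \ge \lambda_1$, take the largest $k \ge 1$ with $\lambda_k \le \lambda$, so $\lambda_k \le \lambda < \lambda_{k+1}$, and pick a symmetric compact $A \in \F_k$ on which the Rayleigh quotient is at most $\lambda$. The pseudo-index theorem for the cohomological index then produces a candidate critical value
\[
c := \inf_{h \in \Gamma}\ \sup_{u \in h(A)} I_\lambda(u),
\]
where $\Gamma$ is a class of odd continuous deformations that enlarge $A$ while preserving a pseudo-index $\ge k+1$. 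The spectral gap $\lambda < \lambda_{k+1}$ together with the index-based linking forces $I_\lambda$ to be bounded below by a strictly positive constant on every $h(A)$, giving $c > 0$.

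The principal difficulty is to establish the upper estimate $c < c^\ast$, which unlocks the \PS{c} condition. For this we enlarge the test family by combining $A$ with truncated Hardy-Sobolev bubbles
\[
U_\eps(x) = \eps^{-(N-p)/p}\, U(x/\eps),\qquad U(y) = \frac{c_{N,p,s}}{\bigl(1 + |y|^{(p-s)/(p-1)}\bigr)^{(N-p)/(p-s)}},
\]
multiplied by a cutoff supported near the origin. Expanding $I_\lambda$ along this family and using the smoothness of the eigenfunctions supporting $A$ near $0$, one obtains a bound of the schematic form
\[
\sup I_\lambda \le c^\ast - C_1\, \eps^{\alpha} + C_2\, \eps^{\beta} + o(\eps^{\beta}),
\]
where $\alpha$ is the order of the gain from the $\lambda \int |u|^p$ term applied to $U_\eps$, and $\beta$ is the order of the truncation and interaction error; both depend only on $N, p, s$. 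A direct calculation reduces the inequality $\alpha < \beta$ (which makes the negative term dominate for small $\eps$) exactly to the dimensional hypothesis $(N - p^2)(N - s) > (p - s)\, p$. Once this test-function estimate is in place, $c \in (0, c^\ast)$ and the pseudo-index theorem produces a critical point of $I_\lambda$ at level $c$, furnishing the desired nontrivial solution of \eqref{1}. The delicate balancing of $\alpha$ and $\beta$ is the main technical obstacle.
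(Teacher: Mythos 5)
Your high-level strategy — an index-based minimax to construct a candidate level $c$, bubble-augmented test functions to force $c < c^\ast$, and the dimensional hypothesis \eqref{27} to control the error terms — is the right one, and matches the paper's. But the details of the construction are under-specified in ways that obscure where the real difficulties lie, and as stated the index accounting does not close.

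First, the framework. You invoke a pseudo-index theorem and claim that the deformation class $\Gamma$ "enlarges $A$ while preserving a pseudo-index $\ge k+1$," starting from $A \in \F_k$ with $i(A) = k$. By monotonicity of the index, $i^\ast(A) \le i(A) = k$, so $A$ alone cannot certify pseudo-index $k+1$ and the level $c^\ast_{k+1}$ is simply not reached. The paper instead uses the linking theorem (Theorem~\ref{Theorem 3}), not the pseudo-index Theorem~\ref{Theorem 6}: one takes $A_0 = E_\delta$ (a truncated index-$k$ set of eigenfunctions, supported away from the origin), $B_0 = \Psi_{\lambda_{k+1}}$, so that $i(A_0) = i(\M \setminus B_0) = k$, and augments $A_0$ by a bubble direction $v_0 = \pi(v_{\eps,\delta})$ supported near the origin. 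The linking set $A$ is then a cone over $A_0$ plus segments to $v_0$ — effectively a suspension, which is how the extra unit of index enters. The pseudo-index machinery (Theorem~\ref{Theorem 6}) is the tool for the bifurcation/multiplicity results (Theorems~\ref{Theorem 4} and~\ref{Theorem 5}), where $\lambda < \lambda_{k+1}$ strictly and a set of index $k+m$ already exists; here, with $\lambda = \lambda_k$ possibly, the linking construction is the one that works.

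Second, the disjoint-support construction, which you do not mention, is load-bearing. The eigenfunction set $E$ must be truncated to $E_\delta$ (supported outside $B_{3\delta/4}$) precisely so that the supports of $w \in E_\delta$ and $v_0$ are disjoint, which makes the two contributions to $\psi_\lambda(u)$ factor (as in \eqref{24}); without this the cross-terms are uncontrolled.

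Third, the schematic bound $\sup I_\lambda \le c^\ast - C_1\eps^\alpha + C_2\eps^\beta$ with a single error exponent $\beta$ is too coarse. The actual estimate has two error terms of different character: one from the bubble cutoff, scaling as $\Theta_{\eps,\delta}^{(N-p)/(p-s)}$ with $\Theta_{\eps,\delta} = \eps\,\delta^{-(p-s)/(p-1)}$, and one from the eigenfunction truncation, scaling as $\delta^{(N-p)(N-s)/(p-s)}$ after a subadditivity step (\eqref{25}). The gain term is $\eps^{(p-1)p/(p-s)}$. One then sets $\delta = \eps^\alpha$ and must choose $\alpha$ so that \emph{both} errors are of higher order; this is possible precisely when the window $(\alpha_1,\alpha_2)$ is nonempty, i.e.
\[
\alpha_1 = \frac{(p-1)p}{(N-p)(N-s)} < \frac{(N-p^2)(p-1)}{(N-p)(p-s)} = \alpha_2,
\]
which is exactly \eqref{27}. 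There is also a dichotomy in the paper that your sketch skips: the subadditivity step requires $\psi_\lambda(v_0)$ bounded away from zero, so one first handles separately the (easy) case where $\psi_\lambda(v_0)$ can be made small along a sequence, and only then runs the $\delta = \eps^\alpha$ argument. Without naming the two competing error terms, the role of $\delta$, and the dichotomy, the claim that "$\alpha < \beta$ reduces exactly to \eqref{27}" is an assertion rather than a proof.
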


\begin{remark}
We note that \eqref{27} implies $N > p^2$.
\end{remark}

\begin{remark}
In the nonsingular case $s = 0$, related results can be found in Degiovanni and Lancelotti \cite{MR2514055} for the $p$-Laplacian and in Mosconi et al.\! \cite{MR3530213} for the fractional $p$\nobreakdash-Laplacian.
\end{remark}

Weak solutions of problem \eqref{1} coincide with critical points of the $C^1$-functional
\[
I_\lambda(u) = \int_\Omega \left[\frac{1}{p}\, \big(|\nabla u|^p - \lambda\, |u|^p\big) - \frac{1}{p^\ast(s)}\, \frac{|u|^{p^\ast(s)}}{|x|^s}\right] dx, \quad u \in W^{1,p}_0(\Omega).
\]
Recall that $I_\lambda$ satisfies the Palais-Smale compactness condition at the level $c \in \R$, or the \PS{c} condition for short, if every sequence $\seq{u_j} \subset W^{1,p}_0(\Omega)$ such that $I_\lambda(u_j) \to c$ and $I_\lambda'(u_j) \to 0$ has a convergent subsequence. Let
\begin{equation} \label{3}
\mu_s = \inf_{u \in W^{1,p}_0(\Omega) \setminus \set{0}}\, \frac{\dint_\Omega |\nabla u|^p\, dx}{\left(\dint_\Omega \frac{|u|^{p^\ast(s)}}{|x|^s}\, dx\right)^{p/p^\ast(s)}}
\end{equation}
be the best constant in the Hardy-Sobolev inequality, which is independent of $\Omega$ (see \cite[Theorem 3.1.(1)]{MR1695021}). It was shown in \cite[Theorem 4.1.(2)]{MR1695021} that $I_\lambda$ satisfies the \PS{c} condition for all
\[
c < \frac{p - s}{(N - s)\, p}\; \mu_s^{(N-s)/(p-s)}
\]
for any $\lambda > 0$. We will prove Theorems \ref{Th4} -- \ref{Theorem 2} by constructing suitable minimax levels below this threshold for compactness. When $0 < \lambda < \lambda_1$, we will show that the infimum of $I_\lambda$ on the Nehari manifold is below this level. When $\lambda \ge \lambda_1$, $I_\lambda$ no longer has the mountain pass geometry and a linking type argument is needed. However, the classical linking theorem cannot be used here since the nonlinear operator $- \Delta_p$ does not have linear eigenspaces. We will use a nonstandard linking construction based on sublevel sets as in Perera and Szulkin \cite{MR2153141} (see also Perera et al.\! \cite[Proposition 3.23]{MR2640827}). Moreover, the standard sequence of eigenvalues of $- \Delta_p$ based on the genus does not give enough information about the structure of the sublevel sets to carry out this construction. Therefore, we will use a different sequence of eigenvalues introduced in Perera \cite{MR1998432} that is based on a cohomological index.

For $1 < p < \infty$, eigenvalues of problem \eqref{2} coincide with critical values of the functional
\[
\Psi(u) = \frac{1}{\dint_\Omega |u|^p\, dx}, \quad u \in \M = \set{u \in W^{1,p}_0(\Omega) : \int_\Omega |\nabla u|^p\, dx = 1}.
\]
Let $\F$ denote the class of symmetric subsets of $\M$, let $i(M)$ denote the $\Z_2$-cohomological index of $M \in \F$ (see section \ref{Section 2.1}), and set
\[
\lambda_k := \inf_{M \in \F,\; i(M) \ge k}\, \sup_{u \in M}\, \Psi(u), \quad k \in \N.
\]
Then $0 < \lambda_1 < \lambda_2 \le \lambda_3 \le \cdots \to \infty$ is a sequence of eigenvalues of \eqref{2} and
\begin{equation} \label{4}
\lambda_k < \lambda_{k+1} \implies i(\Psi^{\lambda_k}) = i(\M \setminus \Psi_{\lambda_{k+1}}) = k,
\end{equation}
where $\Psi^a = \set{u \in \M : \Psi(u) \le a}$ and $\Psi_a = \set{u \in \M : \Psi(u) \ge a}$ for $a \in \R$ (see Perera et al.\! \cite[Propositions 3.52 and 3.53]{MR2640827}). We also prove the following bifurcation and multiplicity results for problem \eqref{1} that do not require $N \ge p^2$. Set
\[
V_s(\Omega) = \int_\Omega |x|^{(N-p)\, s/(p-s)}\, dx,
\]
and note that
\begin{equation} \label{28}
\int_\Omega |u|^p\, dx \le V_s(\Omega)^{(p-s)/(N-s)} \left(\int_\Omega \frac{|u|^{p^\ast(s)}}{|x|^s}\, dx\right)^{p/p^\ast(s)} \quad \forall u \in W^{1,p}_0(\Omega)
\end{equation}
by the H\"{o}lder inequality.

\begin{theorem} \label{Theorem 4}
If
\[
\lambda_1 - \frac{\mu_s}{V_s(\Omega)^{(p-s)/(N-s)}} < \lambda < \lambda_1,
\]
then problem \eqref{1} has a pair of nontrivial solutions $\pm\, u^\lambda$ such that $u^\lambda \to 0$ as $\lambda \nearrow \lambda_1$.
\end{theorem}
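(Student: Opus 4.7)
The plan is a symmetric mountain pass argument for the even functional $I_\lambda$, with the mountain pass level estimated explicitly along a first eigenfunction.

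First I would verify the mountain pass geometry. Since $\lambda < \lambda_1$,
\[
\int_\Omega |\nabla u|^p\, dx - \lambda \int_\Omega |u|^p\, dx \ge \Bigl(1 - \tfrac{\lambda}{\lambda_1}\Bigr) \int_\Omega |\nabla u|^p\, dx,
\]
which together with the Hardy-Sobolev inequality $\int_\Omega |u|^{p^\ast(s)}/|x|^s\, dx \le \mu_s^{-p^\ast(s)/p} \bigl(\int_\Omega |\nabla u|^p\, dx\bigr)^{p^\ast(s)/p}$ shows that $0$ is a strict local minimum of $I_\lambda$ at which $I_\lambda(0) = 0$. Taking $u_1$ to be a first eigenfunction of \eqref{2} normalized so that $\int_\Omega |\nabla u_1|^p\, dx = 1$ (and hence $\int_\Omega |u_1|^p\, dx = 1/\lambda_1$), one has $I_\lambda(t u_1) \to -\infty$ as $t \to \infty$.

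Next I would bound the mountain pass level $c_\lambda$ below the compactness threshold $\frac{p-s}{(N-s)\, p}\, \mu_s^{(N-s)/(p-s)}$ recalled from \cite{MR1695021}. An elementary one-variable maximization yields
\[
c_\lambda \le \max_{t \ge 0} I_\lambda(t u_1) = \frac{p - s}{(N - s)\, p} \cdot \frac{(1 - \lambda/\lambda_1)^{(N-s)/(p-s)}}{\bigl(\int_\Omega |u_1|^{p^\ast(s)}/|x|^s\, dx\bigr)^{(N-p)/(p-s)}}.
\]
Applying \eqref{28} to $u_1$ and using $p/p^\ast(s) = (N-p)/(N-s)$ gives
\[
\Bigl(\int_\Omega \tfrac{|u_1|^{p^\ast(s)}}{|x|^s}\, dx\Bigr)^{(N-p)/(N-s)} \ge \frac{1}{\lambda_1\, V_s(\Omega)^{(p-s)/(N-s)}},
\]
and raising this to the $(N-s)/(p-s)$-th power shows that $c_\lambda < \frac{p-s}{(N-s)\, p}\, \mu_s^{(N-s)/(p-s)}$ precisely when $\lambda_1 - \lambda < \mu_s / V_s(\Omega)^{(p-s)/(N-s)}$, which is exactly the hypothesis. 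The \PS{c_\lambda} condition and the mountain pass theorem then yield a nontrivial critical point $u^\lambda$; since $I_\lambda$ is even, $-u^\lambda$ is also a solution.

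For the convergence as $\lambda \nearrow \lambda_1$: testing $I_\lambda'(u^\lambda) = 0$ against $u^\lambda$ and combining with $I_\lambda(u^\lambda) = c_\lambda$ yields $\int_\Omega |u^\lambda|^{p^\ast(s)}/|x|^s\, dx = \frac{p\, p^\ast(s)}{p^\ast(s) - p}\, c_\lambda$. Substituting this into $\int_\Omega |\nabla u^\lambda|^p\, dx - \lambda \int_\Omega |u^\lambda|^p\, dx = \int_\Omega |u^\lambda|^{p^\ast(s)}/|x|^s\, dx$ and invoking $\int_\Omega |\nabla u|^p\, dx - \lambda \int_\Omega |u|^p\, dx \ge (1 - \lambda/\lambda_1) \int_\Omega |\nabla u|^p\, dx$ gives $\int_\Omega |\nabla u^\lambda|^p\, dx = O\bigl((\lambda_1 - \lambda)^{(N-p)/(p-s)}\bigr) \to 0$. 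The main obstacle I anticipate is the tight bookkeeping of exponents in the level estimate: the hypothesis is sharp, so every inequality in the chain from $\max_t I_\lambda(t u_1)$ down to the compactness threshold must hold with the correct rate, with \eqref{28} applied to $u_1$ providing the crucial link.
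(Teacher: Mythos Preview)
Your argument is correct and yields exactly the sharp level bound
\[
c_\lambda \le \frac{p-s}{(N-s)\,p}\,(\lambda_1 - \lambda)^{(N-s)/(p-s)}\,V_s(\Omega),
\]
which is the same estimate the paper obtains (for $k=0$) in its proof of Theorem~\ref{Theorem 5}. The paper, however, does not invoke the classical mountain pass theorem: it says the proof of Theorem~\ref{Theorem 4} is ``similar and simpler'' than that of Theorem~\ref{Theorem 5}, meaning it applies the abstract pseudo-index Theorem~\ref{Theorem 6} with $k=0$, $m=1$, taking $A_0$ a compact symmetric subset of $\Psi^{\lambda_1}$ of index $1$ and $B_0=\M$. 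Your approach replaces this machinery with the ordinary mountain pass along the ray through a first eigenfunction, which is more elementary and entirely adequate here since only one pair of solutions is claimed; the cohomological-index framework is genuinely needed only for Theorem~\ref{Theorem 5}, where one wants $m$ pairs in a higher eigenvalue gap and no linear decomposition is available. The core computational step---bounding the energy on the cone over $\Psi^{\lambda_1}$ via \eqref{28}---is common to both arguments. Your convergence argument is also slightly different (you use the coercivity factor $1-\lambda/\lambda_1$, whereas the paper goes through \eqref{28} to control $\int |u^\lambda|^p$ first), but both are valid and give the same rate.
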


\begin{theorem} \label{Theorem 5}
If $\lambda_k \le \lambda < \lambda_{k+1} = \cdots = \lambda_{k+m} < \lambda_{k+m+1}$ for some $k, m \in \N$ and
\begin{equation} \label{1.10}
\lambda > \lambda_{k+1} - \frac{\mu_s}{V_s(\Omega)^{(p-s)/(N-s)}},
\end{equation}
then problem \eqref{1} has $m$ distinct pairs of nontrivial solutions $\pm\, u^\lambda_j,\, j = 1,\dots,m$ such that $u^\lambda_j \to 0$ as $\lambda \nearrow \lambda_{k+1}$.
\end{theorem}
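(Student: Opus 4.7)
The plan is to apply a multiplicity theorem based on a pseudo-index built from the $\Z_2$-cohomological index, in the spirit of Perera-Szulkin \cite{MR2153141} (see also \cite[Proposition 3.23]{MR2640827}), using a nonstandard linking via sub- and superlevel sets of $\Psi$. Write $\Lambda := \lambda_{k+1} = \cdots = \lambda_{k+m}$; applying \eqref{4} once at $k$ and once at $k+m$ yields $i(\M \setminus \Psi_\Lambda) = k$ and $i(\Psi^\Lambda) = k + m$, whose difference $m$ is precisely the desired number of pairs. The linking sets will be the symmetric cones $E_{-} := \{tv : v \in \Psi^\Lambda,\, t \in \R\}$ and $E_{+} := \{tv : v \in \Psi_\Lambda,\, t \in \R\}$ inside $X := W^{1,p}_0(\Omega)$, with positive set $B := r_*\Psi_\Lambda \subset E_{+}$ for a small radius $r_* = r_*(\lambda)$ to be chosen.

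The core computation is a fiberwise analysis of $t \mapsto I_\lambda(tv)$ for $v \in \M$, whose maximum over $t \ge 0$ is
\[
\max_{t \ge 0} I_\lambda(tv) = \frac{p-s}{(N-s)\,p}\,\frac{(1 - \lambda/\Psi(v))_+^{(N-s)/(p-s)}}{\big(\int_\Omega |v|^{p^\ast(s)}/|x|^s\,dx\big)^{(N-p)/(p-s)}},
\]
interpreted as $0$ when $\Psi(v) \le \lambda$. On $E_{-}$, where $\Psi(v) \le \Lambda$, the H\"older inequality \eqref{28} gives $a^{(N-s)/(p-s)} b^{-(N-p)/(p-s)} \le V_s(\Omega)$ with $a := \int |v|^p = 1/\Psi(v)$ and $b := \int |v|^{p^\ast(s)}/|x|^s$, and the elementary identity $1 - \lambda a = a(\Psi(v) - \lambda) \le a(\Lambda - \lambda)$ collapses the expression above to
\[
\sup_{u \in E_{-}} I_\lambda(u) \le \frac{p-s}{(N-s)\,p}\,(\Lambda - \lambda)^{(N-s)/(p-s)}\,V_s(\Omega) =: c^*_\lambda,
\]
which by \eqref{1.10} lies strictly below the compactness threshold $c^* := \frac{p-s}{(N-s)p}\mu_s^{(N-s)/(p-s)}$ and tends to $0$ as $\lambda \nearrow \Lambda$. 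On the positive side, $\int |v|^p \le 1/\Lambda$ for $v \in \Psi_\Lambda$ together with $\int |v|^{p^\ast(s)}/|x|^s \le \mu_s^{-p^\ast(s)/p}$ from \eqref{3} yield $I_\lambda(rv) \ge \frac{r^p}{p}(1 - \lambda/\Lambda) - \frac{r^{p^\ast(s)}}{p^\ast(s)\,\mu_s^{p^\ast(s)/p}}$; choosing $r = r_*$ to maximize the right-hand side gives $\inf_{B} I_\lambda \ge \alpha_\lambda := (1 - \lambda/\Lambda)^{(N-s)/(p-s)}\,c^* > 0$.

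The pseudo-index machinery of \cite[Proposition 3.23]{MR2640827}, applied to the class of symmetric closed subsets of $X$ that link $B$ in the cohomological sense encoded by the radial projection onto $\M$, then produces a nondecreasing sequence of critical values
\[
\alpha_\lambda \le c^\lambda_1 \le \cdots \le c^\lambda_m \le c^*_\lambda < c^*,
\]
at which the \PS{c} condition holds by \cite[Theorem 4.1(2)]{MR1695021}. The index excess $i(E_{-}) - i(X \setminus E_{+}) = (k+m) - k = m$ delivers $m$ pairs $\pm u^\lambda_j$, with distinct pairs when the $c^\lambda_j$ are distinct and otherwise additional pairs emerging from the cohomological index of the critical set at a degenerate level. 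The main obstacle is verifying that this nonstandard linking based on sublevel sets of $\Psi$ (a direct sum decomposition being unavailable since $-\Delta_p$ has no linear eigenspaces) fits the hypotheses of the abstract pseudo-index theorem, which is precisely the sort of sublevel-set construction carried out in \cite{MR2153141}. Finally, the convergence $u^\lambda_j \to 0$ as $\lambda \nearrow \Lambda$ follows by testing $I_\lambda'(u^\lambda_j) = 0$ against $u^\lambda_j$, yielding $I_\lambda(u^\lambda_j) = \frac{p-s}{(N-s)p}\int |u^\lambda_j|^{p^\ast(s)}/|x|^s\,dx$; since $c^\lambda_j \le c^*_\lambda \to 0$, this forces $\int |u^\lambda_j|^{p^\ast(s)}/|x|^s\,dx \to 0$, and then the identity $\int |\nabla u^\lambda_j|^p\,dx = \lambda \int |u^\lambda_j|^p\,dx + \int |u^\lambda_j|^{p^\ast(s)}/|x|^s\,dx$ combined with \eqref{28} gives $u^\lambda_j \to 0$ in $W^{1,p}_0(\Omega)$.
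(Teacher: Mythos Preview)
Your overall strategy and every quantitative estimate match the paper's proof: the lower bound $\inf_B I_\lambda>0$ for small $r$, the upper bound
\[
\sup_{u\in E_-} I_\lambda(u)\le \frac{p-s}{(N-s)\,p}\,(\lambda_{k+1}-\lambda)^{(N-s)/(p-s)}\,V_s(\Omega),
\]
the use of \eqref{1.10} to put this below the compactness threshold, and the derivation of $u^\lambda_j\to 0$ from $I_\lambda(u^\lambda_j)\to 0$ are all exactly what the paper does.

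There is, however, one genuine gap you have flagged but not closed. The abstract pseudo-index theorem you need (the paper's Theorem~\ref{Theorem 6}, which is the packaged form of the construction you cite from \cite{MR2153141,MR2640827}) defines the minimax levels $c^\ast_j$ over \emph{compact} symmetric sets and therefore requires $A_0$ to be compact, so that the cone segment $X=\{tu:u\in A_0,\,0\le t\le R\}$ is a compact set with $i^\ast(X)\ge k+m$. The sublevel set $\Psi^{\Lambda}\subset\M$ has the correct index $k+m$, but it is not compact. The paper resolves this by invoking Degiovanni and Lancelotti \cite[Theorem~2.3]{MR2514055}, which furnishes a \emph{compact} symmetric subset $A_0\subset\Psi^{\lambda_{k+m}}$ with $i(A_0)=k+m$; this is the missing ingredient in your outline, and it is not supplied by \cite{MR2153141} alone. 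Once $A_0$ is compact, the uniform lower bound $\int_\Omega|v|^{p^\ast(s)}/|x|^s\,dx\ge c>0$ for $v\in A_0$ (which follows from \eqref{28} and $\int_\Omega|v|^p\,dx\ge 1/\lambda_{k+1}$) also yields a single radius $R$ with $I_\lambda\le 0$ on $A=R A_0$, the remaining hypothesis of Theorem~\ref{Theorem 6} that your cone formulation leaves implicit.
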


In particular, we have the following existence result that is new when $N < p^2$.

\begin{corollary} \label{Corollary 1}
If
\[
\lambda_k - \frac{\mu_s}{V_s(\Omega)^{(p-s)/(N-s)}} < \lambda < \lambda_k
\]
for some $k \in \N$, then problem \eqref{1} has a nontrivial solution.
\end{corollary}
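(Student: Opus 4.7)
The plan is to deduce this corollary directly from Theorems \ref{Theorem 4} and \ref{Theorem 5} by locating the eigenvalue sitting immediately above $\lambda$; there is no new analysis, only bookkeeping of indices.

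First, since $\lambda < \lambda_k$, the set $\set{j \in \N : \lambda_j > \lambda}$ is nonempty, and I take $k'$ to be its minimum. Then $k' \le k$, hence $\lambda_{k'} \le \lambda_k$, and $\lambda_{k'-1} \le \lambda < \lambda_{k'}$, with the left inequality understood to be vacuous when $k' = 1$. Combining $\lambda_{k'} \le \lambda_k$ with the corollary's hypothesis yields
\[
\lambda > \lambda_k - \frac{\mu_s}{V_s(\Omega)^{(p-s)/(N-s)}} \ge \lambda_{k'} - \frac{\mu_s}{V_s(\Omega)^{(p-s)/(N-s)}},
\]
which is precisely the gap condition required at index $k'$.

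Next, if $k' = 1$, then $\lambda_1 - \mu_s/V_s(\Omega)^{(p-s)/(N-s)} < \lambda < \lambda_1$, and Theorem \ref{Theorem 4} produces a pair $\pm\, u^\lambda$ of nontrivial solutions. If $k' \ge 2$, I set $m = \max\set{j \ge 1 : \lambda_{k'-1+j} = \lambda_{k'}}$, which is finite because $\lambda_j \to \infty$, so that
\[
\lambda_{k'-1} \le \lambda < \lambda_{k'} = \cdots = \lambda_{k'-1+m} < \lambda_{k'+m}.
\]
Theorem \ref{Theorem 5}, applied with its index $k$ replaced by $k'-1$ and its multiplicity $m$ as just defined, then delivers $m \ge 1$ distinct pairs of nontrivial solutions. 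In either case problem \eqref{1} has a nontrivial solution.

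The only potential obstacle is purely organizational: one must verify that $k'$ and $m$ are well defined, that $\lambda_{k'} \le \lambda_k$ (so that the gap hypothesis transfers correctly), and that the reduced index $k'-1$ lies in $\N$ whenever Theorem \ref{Theorem 5} is invoked. No new compactness argument, minimax construction, or cohomological index estimate is needed beyond those already used to prove Theorems \ref{Theorem 4} and \ref{Theorem 5}.
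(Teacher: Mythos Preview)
Your proof is correct and follows exactly the approach implicit in the paper: the corollary is stated there as an immediate consequence of Theorems \ref{Theorem 4} and \ref{Theorem 5}, with no separate argument given. You have simply spelled out the index bookkeeping---passing to the smallest eigenvalue above $\lambda$ and invoking the appropriate theorem---that the paper leaves to the reader.
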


\begin{remark}
We note that $\lambda_1 \ge \mu_s/V_s(\Omega)^{(p-s)/(N-s)}$. Indeed, let $\varphi_1 > 0$ be an eigenfunction associated with $\lambda_1$. Then
\[
\lambda_1 = \frac{\dint_\Omega |\nabla \varphi_1|^p\, dx}{\dint_\Omega \varphi_1^p\, dx} \ge \frac{\mu_s \left(\dint_\Omega \frac{\varphi_1^{p^\ast(s)}}{|x|^s}\, dx\right)^{p/p^\ast(s)}}{\dint_\Omega \varphi_1^p\, dx} \ge \frac{\mu_s}{V_s(\Omega)^{(p-s)/(N-s)}}
\]
by \eqref{3} and \eqref{28}.
\end{remark}

\begin{remark}
Since $V_0(\Omega)$ is the volume of $\Omega$, in the nonsingular case $s = 0$, Theorems \ref{Theorem 4} \& \ref{Theorem 5} and Corollary \ref{Corollary 1} reduce to Perera et al.\! \cite[Theorem 1.1 and Corollary 1.2]{PeSqYa1}, respectively.
\end{remark}

\section{Preliminaries}

\subsection{Cohomological index} \label{Section 2.1}

The $\Z_2$-cohomological index of Fadell and Rabinowitz \cite{MR57:17677} is defined as follows. Let $W$ be a Banach space and let $\A$ denote the class of symmetric subsets of $W \setminus \set{0}$. For $A \in \A$, let $\overline{A} = A/\Z_2$ be the quotient space of $A$ with each $u$ and $-u$ identified, let $f : \overline{A} \to \RP^\infty$ be the classifying map of $\overline{A}$, and let $f^\ast : H^\ast(\RP^\infty) \to H^\ast(\overline{A})$ be the induced homomorphism of the Alexander-Spanier cohomology rings. The cohomological index of $A$ is defined by
\[
i(A) = \begin{cases}
0 & \text{if } A = \emptyset\\[5pt]
\sup \set{m \ge 1 : f^\ast(\omega^{m-1}) \ne 0} & \text{if } A \ne \emptyset,
\end{cases}
\]
where $\omega \in H^1(\RP^\infty)$ is the generator of the polynomial ring $H^\ast(\RP^\infty) = \Z_2[\omega]$.

\begin{example}
The classifying map of the unit sphere $S^{m-1}$ in $\R^m,\, m \ge 1$ is the inclusion $\RP^{m-1} \incl \RP^\infty$, which induces isomorphisms on the cohomology groups $H^q$ for $q \le m - 1$, so $i(S^{m-1}) = m$.
\end{example}

The following proposition summarizes the basic properties of this index.

\begin{proposition}[Fadell-Rabinowitz \cite{MR57:17677}]
The index $i : \A \to \N \cup \set{0,\infty}$ has the following properties:
\begin{properties}{i}
\item Definiteness: $i(A) = 0$ if and only if $A = \emptyset$.
\item Monotonicity: If there is an odd continuous map from $A$ to $B$ (in particular, if $A \subset B$), then $i(A) \le i(B)$. Thus, equality holds when the map is an odd homeomorphism.
\item Dimension: $i(A) \le \dim W$.
\item Continuity: If $A$ is closed, then there is a closed neighborhood $N \in \A$ of $A$ such that $i(N) = i(A)$. When $A$ is compact, $N$ may be chosen to be a $\delta$-neighborhood $N_\delta(A) = \set{u \in W : \dist{u}{A} \le \delta}$.
\item Subadditivity: If $A$ and $B$ are closed, then $i(A \cup B) \le i(A) + i(B)$.
\item Stability: If $SA$ is the suspension of $A \ne \emptyset$, obtained as the quotient space of $A \times [-1,1]$ with $A \times \set{1}$ and $A \times \set{-1}$ collapsed to different points, then $i(SA) = i(A) + 1$.
\item Piercing property: If $A$, $A_0$ and $A_1$ are closed, and $\varphi : A \times [0,1] \to A_0 \cup A_1$ is a continuous map such that $\varphi(-u,t) = - \varphi(u,t)$ for all $(u,t) \in A \times [0,1]$, $\varphi(A \times [0,1])$ is closed, $\varphi(A \times \set{0}) \subset A_0$ and $\varphi(A \times \set{1}) \subset A_1$, then $i(\varphi(A \times [0,1]) \cap A_0 \cap A_1) \ge i(A)$.
\item Neighborhood of zero: If $U$ is a bounded closed symmetric neighborhood of the origin, then $i(\bdry{U}) = \dim W$.
\end{properties}
\end{proposition}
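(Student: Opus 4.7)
The plan is to derive each of the eight properties directly from the definition: $i(A)$ is the largest $m$ with $f^*(\omega^{m-1})\ne 0$, where $f:\overline{A}\to\RP^\infty$ classifies the principal $\Z_2$-bundle $A\to\overline{A}=A/\Z_2$. Since the antipodal action is free on $A\subset W\setminus\set{0}$, the map $f$ exists and is unique up to homotopy, and every claim reduces to a computation in Alexander--Spanier cohomology with $\Z_2$ coefficients.

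Properties $(i_1)$--$(i_3)$ and $(i_8)$ are quick. Definiteness follows from $f^*(1)=1\ne 0$ for $A\ne\emptyset$. For monotonicity, an odd continuous map $A\to B$ descends to $\overline A\to\overline B$, and uniqueness of classifying maps up to homotopy yields a factorization $f_A\simeq f_B\circ\bar g$; hence $f_A^*(\omega^{m-1})\ne 0$ forces $f_B^*(\omega^{m-1})\ne 0$. The dimension bound is immediate from monotonicity and the example $i(S^{\dim W-1})=\dim W$: the radial map $u\mapsto u/\|u\|$ is an odd continuous map $A\to S^{\dim W-1}$. Property $(i_8)$ follows similarly, since the radial retraction is an odd homeomorphism from $\bdry U$ onto $S^{\dim W-1}$.

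Continuity $(i_4)$ rests on the tautness property of Alexander--Spanier cohomology on closed subsets of paracompact spaces: $\check H^*(A)=\varinjlim\check H^*(N)$ over symmetric closed neighborhoods $N$, so both the nonvanishing of $f^*(\omega^{i(A)-1})$ and the vanishing of $f^*(\omega^{i(A)})$ persist in a sufficiently small such $N$ (a $\delta$-neighborhood when $A$ is compact). Subadditivity $(i_5)$ is a cup-product argument: if $i(A)=k$ and $i(B)=\ell$, then $\omega^k$ restricted to $\overline{A}$ vanishes, so it lifts to a relative class on $(\overline{A\cup B},\overline A)$, and similarly $\omega^\ell$ lifts to a relative class on $(\overline{A\cup B},\overline B)$; thus the cup product $\omega^{k+\ell}=\omega^k\smile\omega^\ell$ lifts to a relative class on $(\overline{A\cup B},\overline A\cup\overline B)=(\overline{A\cup B},\overline{A\cup B})$, whose relative cohomology vanishes. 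Stability $(i_6)$ comes from identifying $\overline{SA}$ with the topological suspension of $\overline A$ (the two $\Z_2$-fixed points in $SA$ becoming the cone points), after which a suspension isomorphism argument shifts the top nonzero power of $\omega$ up by one.

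The main obstacle is the piercing property $(i_7)$, a form of equivariant Borsuk--Ulam. I would set $V_i=\varphi(A\times[0,1])\cap A_i$ for $i=0,1$, so that $V_0\cup V_1=\varphi(A\times[0,1])$ and $V_0\cap V_1$ is the set whose index must be bounded below. The endpoint conditions provide odd maps $A\to V_0$ and $A\to V_1$ via $u\mapsto\varphi(u,0)$ and $u\mapsto\varphi(u,1)$, which are equivariantly homotopic through $\varphi(A\times[0,1])$ via $\varphi$ itself; so monotonicity through either composition yields $i(\varphi(A\times[0,1]))\ge i(A)$. The core step is then a Mayer--Vietoris analysis for the closed symmetric cover $\overline{V_0}\cup\overline{V_1}=\overline{\varphi(A\times[0,1])}$: combining the cup-product structure with the connecting homomorphism, the nontrivial pullback $f^*(\omega^{i(A)-1})$ on the union cannot vanish simultaneously on $\overline{V_0}$ and $\overline{V_1}$ without forcing a nonzero class on $\overline{V_0\cap V_1}$. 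Verifying equivariance throughout and using the closedness hypothesis on $\varphi(A\times[0,1])$ to legitimately apply Mayer--Vietoris are the delicate technical steps.
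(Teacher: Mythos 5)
The paper offers no proof of this proposition; it is stated as a citation to Fadell and Rabinowitz \cite{MR57:17677}. Judged on its own terms, your sketch handles $(i_1)$--$(i_5)$ correctly — definiteness, the factorization of classifying maps for monotonicity, radial projection plus the unit-sphere example for the dimension bound, tautness of Alexander--Spanier cohomology for continuity, and the cup-product lifting into relative cohomology for subadditivity — and it correctly singles out the piercing property as the substantial step, even though your Mayer--Vietoris outline there is only a gesture.

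There are, however, two genuine errors. For stability $(i_6)$, it is \emph{not} true that $\overline{SA}$ is the topological suspension of $\overline A$. For $SA$ to carry a free $\Z_2$-action the involution must swap the two cone points, $[u,t]\mapsto[-u,-t]$, and then $\overline{SA}$ is the mapping cone of the double cover $q:A\to\overline A$, not $S\overline A$. Already for $A=S^0$ one has $SA=S^1$ with the antipodal action, so $\overline{SA}=\RP^1\cong S^1$, while $S\overline A$ is the suspension of a point, i.e.\ an interval; these are not even homotopy equivalent. So the ``suspension isomorphism on $\overline A$'' step has nothing to stand on; the correct argument must exploit the pair $(\overline{SA},\overline A)$ together with the cone structure, or a join/Gysin-type computation. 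For the neighborhood-of-zero property $(i_8)$, the radial map $\bdry U\to S^{\dim W-1}$ is indeed odd and continuous (since $0\notin\bdry U$), but it is in general not injective, hence not a homeomorphism: $\bdry U$ can be disconnected or otherwise non-spherical even for a bounded closed symmetric neighborhood of the origin (take $U$ to be a closed ball together with a disjoint symmetric annular shell). Your map therefore only reproves $(i_3)$, giving $i(\bdry U)\le\dim W$. The missing lower bound $i(\bdry U)\ge\dim W$ is the real content, and it needs a Borsuk--Ulam-type input; for instance the piercing property applied to $\varphi(u,t)=((1-t)r+tR)\,u$ with $B_r\subset U\subset B_R$, $A_0=\closure U$, $A_1=W\setminus\operatorname{int}U$, yields $i(\bdry U)\ge i(S_r)=\dim W$.
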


\subsection{Abstract critical point theorems}

We will prove Theorems \ref{Theorem 1} and \ref{Theorem 2} using the following abstract critical point theorem proved in Yang and Perera \cite{YaPe2}, which generalizes the well-known linking theorem of Rabinowitz \cite{MR0488128}.

\begin{theorem} \label{Theorem 3}
Let $I$ be a $C^1$-functional defined on a Banach space $W$, and let $A_0$ and $B_0$ be disjoint nonempty closed symmetric subsets of the unit sphere $S = \set{u \in W : \norm{u} = 1}$ such that
\[
i(A_0) = i(S \setminus B_0) < \infty.
\]
Assume that there exist $R > r > 0$ and $v \in S \setminus A_0$ such that
\[
\sup I(A) \le \inf I(B), \qquad \sup I(X) < \infty,
\]
where
\begin{gather*}
A = \set{tu : u \in A_0,\, 0 \le t \le R} \cup \set{R\, \pi((1 - t)\, u + tv) : u \in A_0,\, 0 \le t \le 1},\\[10pt]
B = \set{ru : u \in B_0},\\[10pt]
X = \set{tu : u \in A,\, \norm{u} = R,\, 0 \le t \le 1},
\end{gather*}
and $\pi : W \setminus \set{0} \to S,\, u \mapsto u/\norm{u}$ is the radial projection onto $S$. Let $\Gamma = \{\gamma \in C(X,W) : \gamma(X) \text{ is closed and} \restr{\gamma}{A} = \id[\! A]\}$, and set
\[
c := \inf_{\gamma \in \Gamma}\, \sup_{u \in \gamma(X)}\, I(u).
\]
Then
\begin{equation} \label{5}
\inf I(B) \le c \le \sup I(X),
\end{equation}
in particular, $c$ is finite. If, in addition, $I$ satisfies the {\em \PS{c}} condition, then $c$ is a critical value of $I$.
\end{theorem}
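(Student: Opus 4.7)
The plan is to proceed in three stages: the upper bound $c \le \sup I(X)$, the lower bound $c \ge \inf I(B)$ via a linking lemma, and the criticality of $c$ under the \PS{c} condition via a deformation argument.

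The upper bound is immediate: $\id[X] \in \Gamma$ since $X$ is closed and $\restr{\id[X]}{A} = \id[A]$, so $c \le \sup I(X)$.

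For the lower bound, the heart of the matter is the linking claim that $\gamma(X) \cap B \ne \emptyset$ for every $\gamma \in \Gamma$, which gives $\sup I(\gamma(X)) \ge \inf I(B)$ and hence $c \ge \inf I(B)$. I would argue by contradiction: assuming some $\gamma \in \Gamma$ with $\gamma(X) \cap B = \emptyset$, the goal is to manufacture an odd continuous map from a symmetric set of cohomological index strictly greater than $i(A_0)$ into $S \setminus B_0$, contradicting $i(S \setminus B_0) = i(A_0)$ via monotonicity $(i_2)$. The key observation is that although $A$ itself is not symmetric, the asymmetric ``arc'' $\set{R\, \pi((1-t)u + tv) : u \in A_0,\, 0 \le t \le 1}$ together with its antipodal image (joined along $R A_0$) is homeomorphic to a suspension of $A_0$, whose cohomological index equals $i(A_0) + 1$ by the stability property $(i_6)$. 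Composing $\gamma$ with the radial projection onto the sphere of radius $r$ on the preimage $\set{u \in X : \norm{\gamma(u)} = r}$, using $\gamma(X) \cap B = \emptyset$ to force the image into $r\, (S \setminus B_0)$, and invoking the piercing property $(i_7)$ to locate a symmetric subset of sufficiently high index inside this preimage, produces the desired odd map.

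For the criticality step, suppose $c$ is not a critical value of $I$. Under \PS{c}, the standard deformation lemma supplies $\eps > 0$ and a continuous map $\eta : W \to W$ with $\eta(I^{c+\eps}) \subset I^{c-\eps}$ and $\restr{\eta}{I^{c-\eps}} = \id$. The lower bound already yields $\sup I(A) \le \inf I(B) \le c$, so by shrinking $\eps$ we may arrange $\sup I(A) < c - \eps$, which forces $\restr{\eta}{A} = \id[A]$. Picking $\gamma_0 \in \Gamma$ with $\sup I(\gamma_0(X)) < c + \eps$, the composition $\eta \circ \gamma_0$ again lies in $\Gamma$ and satisfies $\sup I((\eta \circ \gamma_0)(X)) \le c - \eps$, contradicting the definition of $c$.

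The main obstacle is the linking lemma of the second step, because $A$ is only ``half symmetric'': the apex $v$ is a single point rather than an antipodal pair, so one must symmetrize the arc via its antipode before the cohomological index machinery applies. The most delicate bookkeeping lies in ensuring that the symmetric high-index subset one extracts really sits inside a level set of $\norm{\gamma(\cdot)}$ on which $\gamma$ can be radially projected; this is exactly what the piercing property $(i_7)$ is designed to handle, and once it is set up correctly, the other two steps are essentially routine.
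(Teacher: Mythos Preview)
The paper does not give its own proof of this theorem: it is quoted verbatim from Yang and Perera~\cite{YaPe2}, with only a remark pointing to the related linking constructions in~\cite{MR2153141,MR2371112,MR2640827}. So there is no in-paper argument to compare against. Your three-stage outline (upper bound via $\id[X]\in\Gamma$, intersection lemma $\gamma(X)\cap B\ne\emptyset$ via suspension plus the piercing property, then a deformation contradiction) is exactly the scheme used in the cited reference.

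Two points where your sketch needs tightening. First, in the linking step the piercing property $(i_7)$ requires an \emph{odd} map, but $\gamma$ is only defined on the non-symmetric set $X$. The missing ingredient is the odd extension $\tilde\gamma$ to $X\cup(-X)$ by $\tilde\gamma(-u)=-\gamma(u)$; this is well-defined because $X\cap(-X)=\{tu:u\in A_0,\ 0\le t\le R\}\subset A$, on which $\gamma=\id$ is already odd. Only after this extension does the symmetrized arc $C\cup(-C)$ (your suspension of $A_0$) enter as the domain for $(i_7)$, yielding $i\bigl(\tilde\gamma(X\cup(-X))\cap S_r\bigr)\ge i(A_0)+1$ and the contradiction with $i(S\setminus B_0)=i(A_0)$. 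You allude to the antipodal image of the arc but never say how $\gamma$ is carried over to it; this is the one genuinely nonformal step.

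Second, in the deformation step you write ``by shrinking $\eps$ we may arrange $\sup I(A)<c-\eps$.'' The hypotheses give only $\sup I(A)\le\inf I(B)\le c$, and nothing rules out equality; if $\sup I(A)=c$ your deformation $\eta$ need not fix $A$, so $\eta\circ\gamma_0\notin\Gamma$. This borderline case is a known subtlety in non-strict linking theorems and requires either a sharper form of the deformation lemma or a separate argument; it is not handled by merely shrinking $\eps$.
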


\begin{remark}
The linking construction used in the proof of Theorem \ref{Theorem 3} in \cite{YaPe2} has also been used in Perera and Szulkin \cite{MR2153141} to obtain nontrivial solutions of $p$-Laplacian problems with nonlinearities that cross an eigenvalue. A similar construction based on the notion of cohomological linking was given in Degiovanni and Lancelotti \cite{MR2371112}. See also Perera et al.\! \cite[Proposition 3.23]{MR2640827}.
\end{remark}

Now let $I$ be an even $C^1$-functional defined on a Banach space $W$, and let $\A^\ast$ denote the class of symmetric subsets of $W$. Let $r > 0$, let $S_r = \set{u \in W : \norm{u} = r}$, let $0 < b \le + \infty$, and let $\Gamma$ denote the group of odd homeomorphisms of $W$ that are the identity outside $I^{-1}(0,b)$. The pseudo-index of $M \in \A^\ast$ related to $i$, $S_r$, and $\Gamma$ is defined by
\[
i^\ast(M) = \min_{\gamma \in \Gamma}\, i(\gamma(M) \cap S_r)
\]
(see Benci \cite{MR84c:58014}). We will prove Theorems \ref{Theorem 4} and \ref{Theorem 5} using the following critical point theorem proved in Yang and Perera \cite{YaPe2}, which generalizes Bartolo et al. \cite[Theorem 2.4]{MR713209}.

\begin{theorem} \label{Theorem 6}
Let $A_0$ and $B_0$ be symmetric subsets of $S$ such that $A_0$ is compact, $B_0$ is closed, and
\[
i(A_0) \ge k + m, \qquad i(S \setminus B_0) \le k
\]
for some integers $k \ge 0$ and $m \ge 1$. Assume that there exists $R > r$ such that
\[
\sup I(A) \le 0 < \inf I(B), \qquad \sup I(X) < b,
\]
where $A = \set{Ru : u \in A_0}$, $B = \set{ru : u \in B_0}$, and $X = \set{tu : u \in A,\, 0 \le t \le 1}$. For $j = k + 1,\dots,k + m$, let
\[
\A_j^\ast = \set{M \in \A^\ast : M \text{ is compact and } i^\ast(M) \ge j},
\]
and set
\[
c_j^\ast := \inf_{M \in \A_j^\ast}\, \max_{u \in M}\, I(u).
\]
Then
\[
\inf I(B) \le c_{k+1}^\ast \le \dotsb \le c_{k+m}^\ast \le \sup I(X),
\]
in particular, $0 < c_j^\ast < b$. If, in addition, $I$ satisfies the {\em \PS{c}} condition for all $c \in (0,b)$, then each $c_j^\ast$ is a critical value of $I$ and there are $m$ distinct pairs of associated critical points.
\end{theorem}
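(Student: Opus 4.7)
The plan is to follow the Benci pseudo-index scheme (see \cite{MR84c:58014,MR713209}): I will show that the cone $X$ itself is a valid competitor for every $\A_j^\ast$ with $j \le k+m$ (which gives the upper bound $c_j^\ast \le \sup I(X)$), and use the hypothesis $i(S \setminus B_0) \le k$ together with monotonicity of $i$ to extract the lower bound $c_j^\ast \ge \inf I(B)$ for $j \ge k+1$. The critical values and multiplicity then follow from a standard quantitative deformation.

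For the upper bound, I would verify $i^\ast(X) \ge k+m$. Since $\sup I(A) \le 0$ and $I(0) = 0 \notin (0,b)$, every $\gamma \in \Gamma$ fixes both the origin and each point of $A$. The map
\[
\varphi : A_0 \times [0,1] \to W, \qquad \varphi(u,t) = \gamma(tRu),
\]
is continuous, odd in $u$, with $\varphi(\cdot,0) \equiv 0$ and $\varphi(u,1) = Ru$ for $u \in A_0$. The piercing property applied to the closed sets $Y_0 = \overline{B}_r(0)$ and $Y_1 = W \setminus B_r(0)$ (so that $Y_0 \cap Y_1 = S_r$ and $Y_0 \cup Y_1 = W$) yields
\[
i(\gamma(X) \cap S_r) = i(\varphi(A_0 \times [0,1]) \cap S_r) \ge i(A_0) \ge k+m,
\]
since $\gamma(X) = \varphi(A_0 \times [0,1])$. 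Taking the minimum over $\gamma \in \Gamma$ gives $i^\ast(X) \ge k+m$, so $X \in \A_j^\ast$ for all $j \le k+m$ and hence $c_j^\ast \le \sup_X I < b$.

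For the lower bound, any $M \in \A_j^\ast$ with $j \ge k+1$ satisfies (taking $\gamma = \id$ in the definition of $i^\ast$) the inequality $i(M \cap S_r) \ge k+1$. Since $i(S_r \setminus B) = i(S \setminus B_0) \le k$, monotonicity of $i$ forces $M \cap B \ne \emptyset$, hence $\max_M I \ge \inf I(B) > 0$. Taking the infimum over $M$ gives $c_j^\ast \ge \inf I(B)$, while the inclusions $\A_{j+1}^\ast \subset \A_j^\ast$ yield the monotone chain $c_{k+1}^\ast \le \cdots \le c_{k+m}^\ast$.

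Under the \PS{c} condition for $c \in (0,b)$, a standard quantitative deformation in $\Gamma$ together with the continuity and subadditivity of $i$ show that whenever $c := c_j^\ast = \cdots = c_{j+l}^\ast$ one has $i(K_c) \ge l + 1$, producing at least $l + 1$ distinct pairs of critical points at level $c$; summed over the $m$ values this yields $m$ distinct pairs $\pm u_j$. The main obstacle I expect is Step~1: one must verify $\gamma|_A = \id$ and $\gamma(0) = 0$ before invoking piercing, which genuinely uses $\sup I(A) \le 0$ (not merely $\sup I(A) < b$) and $I(0) \notin (0,b)$. Once those two facts are in place, the piercing property carries all of the geometric content and the remaining steps are routine implementations of the pseudo-index framework.
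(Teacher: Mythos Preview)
The paper does not actually prove Theorem~\ref{Theorem 6}: it is quoted from Yang--Perera \cite{YaPe2} and used as a black box, with only a remark indicating that the construction behind it is in the spirit of Fadell--Rabinowitz and Perera--Szulkin. Your proposal therefore goes beyond what the paper itself supplies, and the argument you sketch is precisely the standard Benci pseudo-index proof that the cited reference presumably contains: piercing to bound $i^\ast(X)$ from below, monotonicity to force $M \cap B \ne \emptyset$, and the usual deformation/subadditivity argument for multiplicity.

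Your outline is correct. One small simplification: you do not need to appeal to $I(0) \notin (0,b)$ to obtain $\gamma(0) = 0$. Every $\gamma \in \Gamma$ is odd, so $\gamma(0) = -\gamma(0)$ forces $\gamma(0) = 0$ automatically; the only genuine geometric input needed before invoking the piercing property is $\sup I(A) \le 0$, which guarantees $\gamma|_A = \id$. With that in hand, the map $\varphi(u,t) = \gamma(tRu)$ satisfies all the hypotheses of the piercing property (compactness of $A_0$ makes $\gamma(X) = \varphi(A_0 \times [0,1])$ compact, hence closed), and the rest proceeds exactly as you describe.
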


\begin{remark}
Constructions similar to the one used in the proof of Theorem \ref{Theorem 6} in \cite{YaPe2} have also been used in Fadell and Rabinowitz \cite{MR57:17677} to prove bifurcation results for Hamiltonian systems and in Perera and Szulkin \cite{MR2153141} to prove multiplicity results for $p$-Laplacian problems. See also Perera et al.\! \cite[Proposition 3.44]{MR2640827}.
\end{remark}

\subsection{Some estimates}

It was shown in \cite[Theorem 3.1.(2)]{MR1695021} that the infimum in \eqref{3} is attained by the family of functions
\[
u_\eps(x) = \frac{C_{N,p,s}\, \eps^{(N-p)/(p-s)\, p}}{\left[\eps + |x|^{(p-s)/(p-1)}\right]^{(N-p)/(p-s)}}, \quad \eps > 0
\]
when $\Omega = \R^N$, where $C_{N,p,s} > 0$ is chosen so that
\[
\int_{\R^N} |\nabla u_\eps|^p\, dx = \int_{\R^N} \frac{u_\eps^{p^\ast(s)}}{|x|^s}\, dx = \mu_s^{(N-s)/(p-s)}.
\]
Take a smooth function $\eta : [0,\infty) \to [0,1]$ such that $\eta(s) = 1$ for $s \le 1/4$ and $\eta(s) = 0$ for $s \ge 1/2$, and set
\[
u_{\eps,\delta}(x) = \eta\!\left(\frac{|x|}{\delta}\right) u_\eps(x), \quad v_{\eps,\delta}(x) = \frac{u_{\eps,\delta}(x)}{\left(\dint_{\R^N} \frac{u_{\eps,\delta}^{p^\ast(s)}}{|x|^s}\, dx\right)^{1/p^\ast(s)}}, \quad \eps, \delta > 0,
\]
so that
\begin{equation} \label{6}
\int_{\R^N} \frac{v_{\eps,\delta}^{p^\ast(s)}}{|x|^s}\, dx = 1.
\end{equation}
The following estimates were obtained in \cite[Lemma 11.1.(1),(3),(4)]{MR1695021}:
\begin{gather}
\label{7} \int_{\R^N} |\nabla v_{\eps,\delta}|^p\, dx \le \mu_s + C \eps^{(N-p)/(p-s)},\\[10pt]
\label{8} \int_{\R^N} v_{\eps,\delta}^p\, dx \ge \begin{cases}
\dfrac{1}{C}\, \eps^{(p-1)\, p/(p-s)} & \text{if } N > p^2\\[10pt]
\dfrac{1}{C}\, \eps^{(p-1)\, p/(p-s)}\, |\!\log \eps| & \text{if } N = p^2,
\end{cases}
\end{gather}
where $C = C(N,p,s,\delta) > 0$ is a constant. While these estimates are sufficient for the proof of Theorem \ref{Theorem 1}, we will need the following finer estimates in order to prove Theorem \ref{Theorem 2}.

\begin{lemma}
There exists a constant $C = C(N,p,s) > 0$ such that
\begin{gather}
\label{21} \int_{\R^N} |\nabla v_{\eps,\delta}|^p\, dx \le \mu_s + C \Theta_{\eps,\delta}^{(N-p)/(p-s)},\\[10pt]
\label{22} \int_{\R^N} v_{\eps,\delta}^p\, dx \ge \begin{cases}
\dfrac{1}{C}\, \eps^{(p-1)\, p/(p-s)} & \text{if } N > p^2\\[10pt]
\dfrac{1}{C}\, \eps^{(p-1)\, p/(p-s)}\, |\!\log \Theta_{\eps,\delta}| & \text{if } N = p^2,
\end{cases}
\end{gather}
where $\Theta_{\eps,\delta} = \eps\, \delta^{-(p-s)/(p-1)}$.
\end{lemma}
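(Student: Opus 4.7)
The plan is to exploit the scaling invariance of $u_\eps$. Setting $t = (p-s)/(p-1)$ and substituting $x = \eps^{1/t}\, y$, we obtain
\[
u_\eps(x) = \eps^{-(N-p)(p-1)/((p-s)\, p)}\, \phi(y), \qquad \phi(y) = C_{N,p,s}\, (1 + |y|^t)^{-(N-p)/(p-s)},
\]
where the fixed profile $\phi$ has asymptotics $\phi(y) \sim |y|^{-(N-p)/(p-1)}$ and $|\nabla \phi(y)| \sim |y|^{-(N-1)/(p-1)}$ as $|y| \to \infty$. Under this rescaling the cutoff $\eta(|x|/\delta)$ becomes $\eta(|y|/R)$ with $R = \delta/\eps^{1/t} = \Theta_{\eps,\delta}^{-1/t}$, so the joint dependence on $\eps$ and $\delta$ enters only through the scale-invariant parameter $\Theta_{\eps,\delta}$. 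Moreover, the normalization $\int_{\R^N} |\nabla \phi|^p\, dy = \int_{\R^N} \phi^{p^\ast(s)}/|y|^s\, dy = \mu_s^{(N-s)/(p-s)}$ is preserved.

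For \eqref{21}, I would write $\int |\nabla u_{\eps,\delta}|^p\, dx = \int |\nabla u_\eps|^p\, dx + E$, where the cut-off error $E$ is supported on the rescaled annulus $R/4 \le |y| \le R/2$. Integrating the tail bound $|\nabla \phi|^p \lesssim |y|^{-p(N-1)/(p-1)}$ in polar coordinates yields $|E| \lesssim R^{-(N-p)/(p-1)} = \Theta_{\eps,\delta}^{(N-p)/(p-s)}$. A parallel computation on $\int u_{\eps,\delta}^{p^\ast(s)}/|x|^s\, dx$ using $\phi^{p^\ast(s)}/|y|^s \lesssim |y|^{-p(N-s)/(p-1) - s}$ produces a deviation of order $\Theta_{\eps,\delta}^{(N-s)/(p-s)}$ from $\mu_s^{(N-s)/(p-s)}$, which is dominated by the gradient error since $(N-s) > (N-p)$. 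Dividing by the $p/p^\ast(s)$ power of the second integral and expanding then yields \eqref{21}.

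For \eqref{22}, the rescaling gives
\[
\int_{\R^N} u_{\eps,\delta}^p\, dx = \eps^{(p-1)\, p/(p-s)} \int_{\R^N} \eta(|y|/R)^p\, \phi(y)^p\, dy,
\]
so it suffices to bound the inner integral from below. Since $\eta = 1$ on $\set{|y| \le R/4}$, I would drop this cutoff and estimate $\int_{1 \le |y| \le R/4} \phi(y)^p\, dy$ using $\phi(y)^p \gtrsim |y|^{-p(N-p)/(p-1)}$. In polar coordinates this reduces to $\int_1^{R/4} r^\beta\, dr$ with $\beta + 1 = (p^2 - N)/(p-1)$. When $N > p^2$ we have $\beta < -1$ and the integral is bounded below by a positive constant as $R \to \infty$; when $N = p^2$ we have $\beta = -1$ and the integral equals $\log(R/4) \gtrsim |\log \Theta_{\eps,\delta}|$ for $\Theta_{\eps,\delta}$ small. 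Dividing by $(\int u_{\eps,\delta}^{p^\ast(s)}/|x|^s\, dx)^{p/p^\ast(s)}$, which is bounded between positive constants by the previous step, produces \eqref{22}.

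The main obstacle is the need to track the combined $(\eps,\delta)$-dependence through the single scale-invariant parameter $\Theta_{\eps,\delta}$, rather than allowing the constant $C$ to depend on $\delta$ as in \eqref{7}, \eqref{8}. The change of variables $x = \eps^{1/t}\, y$ makes $\Theta_{\eps,\delta}$ the only remaining geometric parameter in the cutoff, but one must verify that every error term and normalization factor depends on $\Theta_{\eps,\delta}$ only, with universal constants independent of $\delta$. The borderline case $N = p^2$ is especially delicate, since one must identify the exact logarithmic dependence on $R$ (hence $\Theta_{\eps,\delta}$), not merely on $\eps$; this is exactly what drives the refined factor $|\!\log \Theta_{\eps,\delta}|$ in place of $|\!\log \eps|$.
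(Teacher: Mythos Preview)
Your argument is correct, but the paper's proof is considerably shorter because it uses a different rescaling. Instead of substituting $x = \eps^{(p-1)/(p-s)}\, y$ to reduce to a fixed profile $\phi$ with cutoff at radius $R = \Theta_{\eps,\delta}^{-(p-1)/(p-s)}$ and then recomputing all tail integrals, the paper rescales by $\delta$: from the identity $u_{\eps,\delta}(\delta x) = \delta^{-(N-p)/p}\, u_{\Theta_{\eps,\delta},1}(x)$ one obtains directly
\[
\int_{\R^N} |\nabla v_{\eps,\delta}|^p\, dx = \int_{\R^N} |\nabla v_{\Theta_{\eps,\delta},1}|^p\, dx, \qquad \int_{\R^N} v_{\eps,\delta}^p\, dx = \delta^p \int_{\R^N} v_{\Theta_{\eps,\delta},1}^p\, dx,
\]
and then \eqref{21}--\eqref{22} follow immediately from the already-cited estimates \eqref{7}--\eqref{8} applied with $\eps$ replaced by $\Theta_{\eps,\delta}$ and $\delta = 1$ (so that the constant $C(N,p,s,1)$ depends only on $N,p,s$). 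The algebra $\delta^p\, \Theta_{\eps,\delta}^{(p-1)p/(p-s)} = \eps^{(p-1)p/(p-s)}$ recovers the stated power of $\eps$ in \eqref{22}. Your route has the advantage of being self-contained and of making transparent why $\Theta_{\eps,\delta}$ is the only scale-invariant parameter, at the cost of redoing the tail computations that \eqref{7}--\eqref{8} already encapsulate; the paper's route leverages those cited estimates and is essentially a two-line reduction.
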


\begin{proof}
We have
\[
u_{\eps,\delta}(\delta x) = \delta^{-(N-p)/p}\, u_{\Theta_{\eps,\delta},1}(x)
\]
and
\[
\int_{\R^N} \frac{u_{\eps,\delta}^{p^\ast(s)}}{|x|^s}\, dx = \int_{\R^N} \frac{u_{\Theta_{\eps,\delta},1}^{p^\ast(s)}}{|x|^s}\, dx.
\]
So
\[
v_{\eps,\delta}(\delta x) = \delta^{-(N-p)/p}\, v_{\Theta_{\eps,\delta},1}(x)
\]
and hence
\[
\nabla v_{\eps,\delta}(\delta x) = \delta^{-N/p}\, \nabla v_{\Theta_{\eps,\delta},1}(x).
\]
Then
\[
\int_{\R^N} |\nabla v_{\eps,\delta}(x)|^p\, dx = \delta^N \int_{\R^N} |\nabla v_{\eps,\delta}(\delta x)|^p\, dx = \int_{\R^N} |\nabla v_{\Theta_{\eps,\delta},1}(x)|^p\, dx
\]
and
\[
\int_{\R^N} v_{\eps,\delta}^p(x)\, dx = \delta^N \int_{\R^N} v_{\eps,\delta}^p(\delta x)\, dx = \delta^p \int_{\R^N} v_{\Theta_{\eps,\delta},1}^p(x)\, dx,
\]
so \eqref{21} and \eqref{22} follow from \eqref{7} and \eqref{8}, respectively.
\end{proof}

Let $i$, $\M$, $\Psi$, and $\lambda_k$ be as in the introduction, and suppose that $\lambda_k < \lambda_{k+1}$. Then the sublevel set $\Psi^{\lambda_k}$ has a compact symmetric subset $E$ of index $k$ that is bounded in $L^\infty(\Omega) \cap C^{1,\alpha}_\loc(\Omega)$ (see Degiovanni and Lancelotti \cite[Theorem 2.3]{MR2514055}). Let $\delta_0 = \dist{0}{\bdry{\Omega}}$, take a smooth function $\theta : [0,\infty) \to [0,1]$ such that $\theta(s) = 0$ for $s \le 3/4$ and $\theta(s) = 1$ for $s \ge 1$, and set
\[
v_\delta(x) = \theta\!\left(\frac{|x|}{\delta}\right) v(x), \quad v \in E,\, 0 < \delta \le \frac{\delta_0}{2}.
\]
Since $E \subset \Psi^{\lambda_k}$ is bounded in $C^1(B_{\delta_0/2}(0))$,
\begin{equation} \label{9}
\int_\Omega |\nabla v_\delta|^p\, dx \le \int_{\Omega \setminus B_\delta(0)} |\nabla v|^p\, dx + C \int_{B_\delta(0)} \left(|\nabla v|^p + \frac{|v|^p}{\delta^p}\right) dx \le 1 + C \delta^{N-p}
\end{equation}
and
\begin{equation} \label{10}
\int_\Omega |v_\delta|^p\, dx \ge \int_{\Omega \setminus B_\delta(0)} |v|^p\, dx = \int_\Omega |v|^p\, dx - \int_{B_\delta(0)} |v|^p\, dx \ge \frac{1}{\lambda_k} - C \delta^N,
\end{equation}
where $C = C(N,p,s,\Omega,k) > 0$ is a constant. By \eqref{28} and \eqref{10},
\begin{equation} \label{11}
\int_\Omega \frac{|v_\delta|^{p^\ast(s)}}{|x|^s}\, dx \ge \frac{1}{C}
\end{equation}
if $\delta > 0$ is sufficiently small.

Now let $\pi : W^{1,p}_0(\Omega) \setminus \set{0} \to \M,\, u \mapsto u/\norm{u}$ be the radial projection onto $\M$, and set
\[
w = \pi(v_\delta), \quad v \in E.
\]
If $\delta > 0$ is sufficiently small,
\begin{equation} \label{12}
\Psi(w) = \frac{\dint_\Omega |\nabla v_\delta|^p\, dx}{\dint_\Omega |v_\delta|^p\, dx} \le \lambda_k + C \delta^{N-p} < \lambda_{k+1}
\end{equation}
by \eqref{9} and \eqref{10}, and
\begin{equation} \label{13}
\int_\Omega \frac{|w|^{p^\ast(s)}}{|x|^s}\, dx = \frac{\dint_\Omega \frac{|v_\delta|^{p^\ast(s)}}{|x|^s}\, dx}{\left(\dint_\Omega |\nabla v_\delta|^p\, dx\right)^{p^\ast(s)/p}} \ge \frac{1}{C}
\end{equation}
by \eqref{9} and \eqref{11}. Since $\supp w = \supp v_\delta \subset \Omega \setminus B_{3 \delta/4}(0)$ and $\supp \pi(v_{\eps,\delta}) = \supp v_{\eps,\delta} \subset \closure{B_{\delta/2}(0)}$,
\begin{equation} \label{14}
\supp w \cap \supp \pi(v_{\eps,\delta}) = \emptyset.
\end{equation}
Set
\[
E_\delta = \set{w : v \in E}.
\]

\begin{lemma} \label{Lemma 1}
For all sufficiently small $\delta > 0$,
\begin{enumroman}
\item \label{15} $E_\delta \cap \Psi_{\lambda_{k+1}} = \emptyset$,
\item \label{16} $i(E_\delta) = k$,
\item \label{17} $\pi(v_{\eps,\delta}) \notin E_\delta$.
\end{enumroman}
\end{lemma}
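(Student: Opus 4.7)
The three claims are essentially packaged consequences of estimates \eqref{9}--\eqref{14} established just before the lemma, so the proof should be short. My plan is to dispose of (i) and (iii) first (each is a one-line consequence of an earlier estimate) and then derive (ii) by a sandwich argument using monotonicity of the cohomological index.

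For (i), I would observe that for $\delta$ small enough, \eqref{12} gives $\Psi(w) \le \lambda_k + C \delta^{N-p} < \lambda_{k+1}$ for every $w \in E_\delta$, so no element of $E_\delta$ lies in $\Psi_{\lambda_{k+1}}$. For (iii), \eqref{14} already records that $\pi(v_{\eps,\delta})$ and each $w \in E_\delta$ have disjoint supports; since both are unit vectors in $\M$, neither is zero, so they cannot coincide.

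For (ii) I would prove $i(E_\delta) \le k$ and $i(E_\delta) \ge k$ separately. The upper bound is immediate from (i): $E_\delta$ is a symmetric subset of $\M \setminus \Psi_{\lambda_{k+1}}$, and by \eqref{4} the latter has cohomological index $k$, so monotonicity of $i$ gives $i(E_\delta) \le k$. For the lower bound I would consider the map $\phi : E \to E_\delta$, $\phi(v) = \pi(v_\delta)$. To apply monotonicity one needs $\phi$ to be well-defined, odd, and continuous. Well-definedness comes from \eqref{10}, which forces $v_\delta \ne 0$ uniformly in $v \in E$ once $\delta$ is small. Oddness is inherited from oddness of the cutoff and of $\pi$, which also confirms that $E_\delta$ is symmetric. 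Continuity of $v \mapsto v_\delta$, and hence of $\phi$, follows from the uniform $C^1$-bound on $E$ over $\overline{B_{\delta_0/2}(0)}$ that was used to derive \eqref{9}--\eqref{11}. Monotonicity then yields $k = i(E) \le i(E_\delta)$.

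The only mild subtlety, and the thing easy to gloss over, is that a single threshold $\delta$ must be chosen to work uniformly in $v \in E$ for (i), (ii), and (iii) simultaneously; this is automatic because the constants $C = C(N,p,s,\Omega,k)$ appearing in \eqref{9}--\eqref{13} depend on $E$ only through its uniform $C^1$-bound near the origin, not on the individual function $v$.
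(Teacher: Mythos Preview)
Your proposal is correct and follows essentially the same approach as the paper: part (i) from \eqref{12}, part (iii) from \eqref{14}, and part (ii) via the sandwich $i(E) \le i(E_\delta) \le i(\M \setminus \Psi_{\lambda_{k+1}})$ using monotonicity of the index together with \eqref{4}. The extra care you take in justifying well-definedness, oddness, and continuity of $v \mapsto \pi(v_\delta)$, as well as the uniformity of the threshold for $\delta$, is reasonable supplementary detail; one small wording quibble is that it is not ``oddness of the cutoff'' but rather linearity of the map $v \mapsto v_\delta$ (the cutoff itself is radial) that makes $\phi$ odd.
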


\begin{proof}
\ref{15} follows from \eqref{12}. By \ref{15}, $E_\delta \subset \M \setminus \Psi_{\lambda_{k+1}}$ and hence
\[
i(E_\delta) \le i(\M \setminus \Psi_{\lambda_{k+1}}) = k
\]
by the monotonicity of the index and \eqref{4}. On the other hand, since $E \to E_\delta,\, v \mapsto \pi(v_\delta)$ is an odd continuous map,
\[
i(E_\delta) \ge i(E) = k.
\]
\ref{16} follows. \ref{17} is immediate from \eqref{14}.
\end{proof}

\section{Proofs}

\subsection{Proof of Theorem \ref{Th4}}

All nontrivial critical points of $I_\lambda$ lie on the Nehari manifold
\[
\calN = \set{u \in W^{1,p}_0(\Omega) \setminus \set{0} : I_\lambda'(u)\, u = 0}.
\]
We will show that $I_\lambda$ attains the ground state energy
\[
c := \inf_{u \in \calN}\, I_\lambda(u)
\]
at a positive critical point.

Since $0 < \lambda < \lambda_1$, $\calN$ is closed, bounded away from the origin, and for $u \in W^{1,p}_0(\Omega) \setminus \set{0}$ and $t > 0$, $tu \in \calN$ if and only if $t = t_u$, where
\[
t_u = \left[\frac{\dint_\Omega \big(|\nabla u|^p - \lambda\, |u|^p\big)\, dx}{\dint_\Omega \frac{|u|^{p^\ast(s)}}{|x|^s}\, dx}\right]^{(N-p)/(p-s)\, p}.
\]
Moreover,
\[
I_\lambda(t_u u) = \sup_{t > 0}\, I_\lambda(tu) = \frac{p - s}{(N - s)\, p}\; \psi_\lambda(u)^{(N-s)/(p-s)},
\]
where
\[
\psi_\lambda(u) = \frac{\dint_\Omega \big(|\nabla u|^p - \lambda\, |u|^p\big)\, dx}{\left(\dint_\Omega \frac{|u|^{p^\ast(s)}}{|x|^s}\, dx\right)^{p/p^\ast(s)}}.
\]
By \eqref{6}--\eqref{8},
\[
\psi_\lambda(v_{\eps,\delta}) \le \begin{cases}
\mu_s - \dfrac{\eps^{(p-1)\, p/(p-s)}}{C} + C \eps^{(N-p)/(p-s)} & \text{if } N > p^2\\[10pt]
\mu_s - \dfrac{\eps^{(p-1)\, p/(p-s)}}{C}\; |\!\log \eps| + C \eps^{(p-1)\, p/(p-s)} & \text{if } N = p^2,
\end{cases}
\]
and in both cases the last expression is strictly less than $\mu_s$ if $\eps > 0$ is sufficiently small, so
\[
c \le I_\lambda(t_{v_{\eps,\delta}} v_{\eps,\delta}) < \frac{p - s}{(N - s)\, p}\; \mu_s^{(N-s)/(p-s)}.
\]
Then $I_\lambda$ satisfies the \PS{c} condition by \cite[Theorem 4.1.(2)]{MR1695021}, and hence $\restr{I_\lambda}{\calN}$ has a minimizer $u_0$ by a standard argument. Then $|u_0|$ is also a minimizer, which is positive by the strong maximum principle.

\subsection{Proof of Theorem \ref{Theorem 1}}

We will show that problem \eqref{1} has a nontrivial solution as long as $\lambda > \lambda_1$ is not an eigenvalue from the sequence $\seq{\lambda_k}$. Then we have $\lambda_k < \lambda < \lambda_{k+1}$ for some $k \in \N$. Fix $\delta > 0$ so small that the first inequality in \eqref{12} implies
\begin{equation} \label{18}
\Psi(w) \le \lambda \quad \forall w \in E_\delta
\end{equation}
and the conclusions of Lemma \ref{Lemma 1} hold. Then let $A_0 = E_\delta$ and $B_0 = \Psi_{\lambda_{k+1}}$, and note that $A_0$ and $B_0$ are disjoint nonempty closed symmetric subsets of $\M$ such that
\begin{equation} \label{23}
i(A_0) = i(\M \setminus B_0) = k
\end{equation}
by Lemma \ref{Lemma 1} \ref{15}, \ref{16} and \eqref{4}. Now let $R > r > 0$, let $v_0 = \pi(v_{\eps,\delta})$, which is in $\M \setminus A_0$ by Lemma \ref{Lemma 1} \ref{17}, and let $A$, $B$ and $X$ be as in Theorem \ref{Theorem 3}.

For $u \in B_0$,
\[
I_\lambda(ru) \ge \frac{1}{p} \left(1 - \frac{\lambda}{\lambda_{k+1}}\right) r^p - \frac{r^{p^\ast(s)}}{p^\ast(s)\, \mu_s^{p^\ast(s)/p}}.
\]
Since $\lambda < \lambda_{k+1}$, and $s < p$ implies $p^\ast(s) > p$, it follows that $\inf I_\lambda(B) > 0$ if $r$ is sufficiently small.

Next we show that $I_\lambda \le 0$ on $A$ if $R$ is sufficiently large. For $w \in A_0$ and $t \ge 0$,
\[
I_\lambda(tw) \le \frac{t^p}{p} \left(1 - \frac{\lambda}{\Psi(w)}\right) \le 0
\]
by \eqref{18}. Now let $w \in A_0$ and $0 \le t \le 1$, and set $u = \pi((1 - t)\, w + tv_0)$. Clearly, $\norm{(1 - t)\, w + tv_0} \le 1$, and since the supports of $w$ and $v_0$ are disjoint by \eqref{14},
\[
\int_\Omega \frac{|(1 - t)\, w + tv_0|^{p^\ast(s)}}{|x|^s}\, dx = (1 - t)^{p^\ast(s)} \int_\Omega \frac{|w|^{p^\ast(s)}}{|x|^s}\, dx + t^{p^\ast(s)} \int_\Omega \frac{v_0^{p^\ast(s)}}{|x|^s}\, dx.
\]
In view of \eqref{13}, and since
\[
\int_\Omega \frac{v_0^{p^\ast(s)}}{|x|^s}\, dx = \frac{\dint_\Omega \frac{v_{\eps,\delta}^{p^\ast(s)}}{|x|^s}\, dx}{\left(\dint_\Omega |\nabla v_{\eps,\delta}|^p\, dx\right)^{p^\ast(s)/p}} \ge \frac{1}{C}
\]
by \eqref{6} and \eqref{7} if $\eps > 0$ is sufficiently small, it follows that
\[
\int_\Omega \frac{|u|^{p^\ast(s)}}{|x|^s}\, dx = \frac{\dint_\Omega \frac{|(1 - t)\, w + tv_0|^{p^\ast(s)}}{|x|^s}\, dx}{\norm{(1 - t)\, w + tv_0}^{p^\ast(s)}} \ge \frac{1}{C}.
\]
Then
\[
I_\lambda(Ru) \le \frac{R^p}{p} - \frac{R^{p^\ast(s)}}{p^\ast(s)} \int_\Omega \frac{|u|^{p^\ast(s)}}{|x|^s}\, dx \le 0
\]
if $R$ is sufficiently large.

Now we show that
\begin{equation} \label{19}
\sup I_\lambda(X) < \frac{p - s}{(N - s)\, p}\; \mu_s^{(N-s)/(p-s)}
\end{equation}
if $\eps > 0$ is sufficiently small. Noting that
\[
X = \set{\rho\, \pi((1 - t)\, w + tv_0) : w \in E_\delta,\, 0 \le t \le 1,\, 0 \le \rho \le R},
\]
let $w \in E_\delta$ and $0 \le t \le 1$, and set $u = \pi((1 - t)\, w + tv_0)$. Then
\begin{eqnarray}
\sup_{0 \le \rho \le R}\, I_\lambda(\rho u) & \le & \sup_{\rho \ge 0}\, \left[\frac{\rho^p}{p} \left(1 - \lambda \int_\Omega |u|^p\, dx\right) - \frac{\rho^{p^\ast(s)}}{p^\ast(s)} \int_\Omega \frac{|u|^{p^\ast(s)}}{|x|^s}\, dx\right] \notag\\[10pt]
\label{20} & = & \frac{p - s}{(N - s)\, p}\; \psi_\lambda(u)^{(N-s)/(p-s)},
\end{eqnarray}
where
\begin{eqnarray}
\psi_\lambda(u) & = & \frac{\left(1 - \lambda \dint_\Omega |u|^p\, dx\right)^+}{\left(\dint_\Omega \frac{|u|^{p^\ast(s)}}{|x|^s}\, dx\right)^{p/p^\ast(s)}} \notag\\[10pt]
& = & \frac{\left(\dint_\Omega \Big[|(1 - t)\, \nabla w + t\, \nabla v_0|^p - \lambda\, |(1 - t)\, w + tv_0|^p\Big]\, dx\right)^+}{\left(\dint_\Omega \frac{|(1 - t)\, w + tv_0|^{p^\ast(s)}}{|x|^s}\, dx\right)^{p/p^\ast(s)}} \notag\\[10pt]
\label{24} & \le & \frac{(1 - t)^p \left(1 - \lambda \dint_\Omega |w|^p\, dx\right)^+ + t^p \left(1 - \lambda \dint_\Omega v_0^p\, dx\right)^+}{\left((1 - t)^{p^\ast(s)} \dint_\Omega \frac{|w|^{p^\ast(s)}}{|x|^s}\, dx + t^{p^\ast(s)} \dint_\Omega \frac{v_0^{p^\ast(s)}}{|x|^s}\, dx\right)^{p/p^\ast(s)}}
\end{eqnarray}
since the supports of $w$ and $v_0$ are disjoint. Since
\[
1 - \lambda \int_\Omega |w|^p\, dx = 1 - \frac{\lambda}{\Psi(w)} \le 0
\]
by \eqref{18},
\begin{eqnarray*}
\psi_\lambda(u) & \le & \psi_\lambda(v_0)\\[10pt]
& = & \frac{\left(\dint_\Omega \Big[|\nabla v_{\eps,\delta}|^p - \lambda\, v_{\eps,\delta}^p\Big]\, dx\right)^+}{\left(\dint_\Omega \frac{v_{\eps,\delta}^{p^\ast(s)}}{|x|^s}\, dx\right)^{p/p^\ast(s)}}\\[10pt]
& \le & \begin{cases}
\mu_s - \dfrac{\eps^{(p-1)\, p/(p-s)}}{C} + C \eps^{(N-p)/(p-s)} & \text{if } N > p^2\\[10pt]
\mu_s - \dfrac{\eps^{(p-1)\, p/(p-s)}}{C}\; |\!\log \eps| + C \eps^{(p-1)\, p/(p-s)} & \text{if } N = p^2
\end{cases}
\end{eqnarray*}
by \eqref{6}--\eqref{8}. In both cases the last expression is strictly less than $\mu_s$ if $\eps > 0$ is sufficiently small, so \eqref{19} follows from \eqref{20}.

The inequalities \eqref{5} now imply that
\[
0 < c < \frac{p - s}{(N - s)\, p}\; \mu_s^{(N-s)/(p-s)}.
\]
Then $I_\lambda$ satisfies the \PS{c} condition by \cite[Theorem 4.1.(2)]{MR1695021}, and hence $c$ is a positive critical value of $I_\lambda$ by Theorem \ref{Theorem 3}.

\subsection{Proof of Theorem \ref{Theorem 2}}

The case where $\lambda > \lambda_1$ is an eigenvalue, but not from the sequence $\seq{\lambda_k}$, was covered in the proof of Theorem \ref{Theorem 1}, so we may assume that $\lambda = \lambda_k < \lambda_{k+1}$ for some $k \in \N$. Take $\delta > 0$ so small that \eqref{12} and the conclusions of Lemma \ref{Lemma 1} hold, let $A_0$, $B_0$ and $v_0$ be as in the proof of Theorem \ref{Theorem 1}, and let $A$, $B$ and $X$ be as in Theorem \ref{Theorem 3}.

As before, $\inf I_\lambda(B) > 0$ if $r$ is sufficiently small, and
\[
I_\lambda(R\, \pi((1 - t)\, w + tv_0)) \le 0 \quad \forall w \in A_0,\, 0 \le t \le 1
\]
if $\Theta_{\eps,\delta}$ is sufficiently small and $R$ is sufficiently large. On the other hand,
\[
I_\lambda(tw) \le \frac{t^p}{p} \left(1 - \frac{\lambda_k}{\Psi(w)}\right) \le C R^p \delta^{N-p} \quad \forall w \in A_0,\, 0 \le t \le R
\]
by \eqref{12}. It follows that $\sup I_\lambda(A) < \inf I_\lambda(B)$ if $\delta$ is also sufficiently small.

It only remains to verify \eqref{19} for suitably small $\eps$ and $\delta$. Maximizing the last expression in \eqref{24} over $0 \le t \le 1$ gives
\begin{equation} \label{25}
\psi_\lambda(u) \le \left[\psi_\lambda(v_0)^{(N-s)/(p-s)} + \psi_\lambda(w)^{(N-s)/(p-s)}\right]^{(p-s)/(N-s)}.
\end{equation}
By \eqref{6}, \eqref{21}, and \eqref{22},
\begin{equation}
\psi_\lambda(v_0) = \frac{\left(\dint_\Omega \Big[|\nabla v_{\eps,\delta}|^p - \lambda_k\, v_{\eps,\delta}^p\Big]\, dx\right)^+}{\left(\dint_\Omega \frac{v_{\eps,\delta}^{p^\ast(s)}}{|x|^s}\, dx\right)^{p/p^\ast(s)}} \le \mu_s - \frac{\eps^{(p-1)\, p/(p-s)}}{C} + C \Theta_{\eps,\delta}^{(N-p)/(p-s)},
\end{equation}
and by \eqref{12} and \eqref{13},
\begin{equation} \label{26}
\psi_\lambda(w) = \frac{\left(1 - \dfrac{\lambda_k}{\Psi(w)}\right)^+}{\left(\dint_\Omega \frac{|w|^{p^\ast(s)}}{|x|^s}\, dx\right)^{p/p^\ast(s)}} \le C \delta^{N-p}.
\end{equation}
Recalling that $\Theta_{\eps,\delta} = \eps\, \delta^{-(p-s)/(p-1)}$, if there exist $\alpha \in (0,(p - 1)/(p - s))$ and a sequence $\eps_j \to 0$ such that, for $\eps = \eps_j$ and $\delta = \eps_j^\alpha$, $\psi_\lambda(v_0) < \mu_s/3$, then $\psi_\lambda(u) \le 2 \mu_s/3$ for sufficiently large $j$ by \eqref{25} and \eqref{26}, which together with \eqref{20} gives the desired result. So we may assume that for all $\alpha \in (0,(p - 1)/(p - s))$, $\psi_\lambda(v_0) \ge \mu_s/3$ for all sufficiently small $\eps$ and $\delta = \eps^\alpha$. Since $(p - s)/(N - s) < 1$, then \eqref{25}--\eqref{26} with $\delta = \eps^\alpha$ yield
\begin{eqnarray*}
\psi_\lambda(u) & \le & \psi_\lambda(v_0) \left[1 + \left(\frac{\psi_\lambda(w)}{\psi_\lambda(v_0)}\right)^{(N-s)/(p-s)}\right]\\[5pt]
& \le & \psi_\lambda(v_0) + C\, \psi_\lambda(w)^{(N-s)/(p-s)}\\[10pt]
& \le & \mu_s - \eps^{(p-1)\, p/(p-s)} \left[\frac{1}{C} - C \eps^{(N-p)(N-s)(\alpha - \alpha_1)/(p-s)} - C \eps^{(N-p)(\alpha_2 - \alpha)/(p-1)}\right],
\end{eqnarray*}
where
\[
0 < \alpha_1 := \frac{(p - 1)\, p}{(N - p)(N - s)} < \frac{(N - p^2)(p - 1)}{(N - p)(p - s)} =: \alpha_2 < \frac{p - 1}{p - s}
\]
by \eqref{27}. Taking $\alpha \in (\alpha_1,\alpha_2)$ now gives the desired conclusion.

\subsection{Proofs of Theorems \ref{Theorem 4} and \ref{Theorem 5}}

We only give the proof of Theorem \ref{Theorem 5}. Proof of Theorem \ref{Theorem 4} is similar and simpler. By \cite[Theorem 4.1.(2)]{MR1695021}, $I_\lambda$ satisfies the \PS{c} condition for all
\[
c < \frac{p - s}{(N - s)\, p}\; \mu_s^{(N-s)/(p-s)},
\]
so we apply Theorem \ref{Theorem 6} with $b$ equal to the right-hand side.

By Degiovanni and Lancelotti \cite[Theorem 2.3]{MR2514055}, the sublevel set $\Psi^{\lambda_{k+m}}$ has a compact symmetric subset $A_0$ with
\[
i(A_0) = k + m.
\]
We take $B_0 = \Psi_{\lambda_{k+1}}$, so that
\[
i(\M \setminus B_0) = k
\]
by \eqref{4}. Let $R > r > 0$ and let $A$, $B$ and $X$ be as in Theorem \ref{Theorem 6}. For $u \in \Psi_{\lambda_{k+1}}$,
\[
I_\lambda(ru) \ge \frac{r^p}{p} \left(1 - \frac{\lambda}{\lambda_{k+1}}\right) - \frac{r^{p^\ast(s)}}{p^\ast(s)\, \mu_s^{p^\ast(s)/p}}
\]
by \eqref{3}. Since $\lambda < \lambda_{k+1}$, and $s < p$ implies $p^\ast(s) > p$, it follows that $\inf I_\lambda(B) > 0$ if $r$ is sufficiently small. For $u \in A_0 \subset \Psi^{\lambda_{k+1}}$,
\[
I_\lambda(Ru) \le \frac{R^p}{p} \left(1 - \frac{\lambda}{\lambda_{k+1}}\right) - \frac{R^{p^\ast(s)}}{p^\ast(s)\, \lambda_{k+1}^{p^\ast(s)/p}\, V_s(\Omega)^{(p-s)/(N-p)}}
\]
by \eqref{28}, so there exists $R > r$ such that $I_\lambda \le 0$ on $A$. For $u \in X$,
\begin{align*}
I_\lambda(u) & \le \frac{\lambda_{k+1} - \lambda}{p} \int_\Omega |u|^p\, dx - \frac{1}{p^\ast(s)\, V_s(\Omega)^{(p-s)/(N-p)}} \left(\int_\Omega |u|^p\, dx\right)^{p^\ast(s)/p}\\[10pt]
& \le \sup_{\rho \ge 0}\, \left[\frac{(\lambda_{k+1} - \lambda)\, \rho}{p} - \frac{\rho^{p^\ast(s)/p}}{p^\ast(s)\, V_s(\Omega)^{(p-s)/(N-p)}}\right]\\[10pt]
& = \frac{p - s}{(N - s)\, p}\; (\lambda_{k+1} - \lambda)^{(N-s)/(p-s)}\, V_s(\Omega).
\end{align*}
So
\[
\sup I_\lambda(X) \le \frac{p - s}{(N - s)\, p}\; (\lambda_{k+1} - \lambda)^{(N-s)/(p-s)}\, V_s(\Omega) < \frac{p - s}{(N - s)\, p}\; \mu_s^{(N-s)/(p-s)}
\]
by \eqref{1.10}. Theorem \ref{Theorem 6} now gives $m$ distinct pairs of (nontrivial) critical points $\pm\, u^\lambda_j,\, j = 1,\dots,m$ of $I_\lambda$ such that
\begin{equation} \label{4.4}
0 < I_\lambda(u^\lambda_j) \le \frac{p - s}{(N - s)\, p}\; (\lambda_{k+1} - \lambda)^{(N-s)/(p-s)}\, V_s(\Omega) \to 0 \text{ as } \lambda \nearrow \lambda_{k+1}.
\end{equation}
Then
\[
\int_\Omega \frac{|u^\lambda_j|^{p^\ast(s)}}{|x|^s}\, dx = \frac{(N - s)\, p}{p - s}\, \left[I_\lambda(u^\lambda_j) - \frac{1}{p}\, I_\lambda'(u^\lambda_j)\, u^\lambda_j\right] = \frac{(N - s)\, p}{p - s}\; I_\lambda(u^\lambda_j) \to 0
\]
and hence $u^\lambda_j \to 0$ in $L^p(\Omega)$ also by \eqref{28}, so
\[
\int_\Omega |\nabla u^\lambda_j|^p\, dx = p\, I_\lambda(u^\lambda_j) + \lambda \int_\Omega |u^\lambda_j|^p\, dx + \frac{p}{p^\ast(s)} \int_\Omega \frac{|u^\lambda_j|^{p^\ast(s)}}{|x|^s}\, dx \to 0.
\]
This completes the proof of Theorem \ref{Theorem 5}.

\def\cdprime{$''$}

\end{document}